\newtheorem{theorem}{Theorem}
\newtheorem{lemma}{Lemma}
\newtheorem{assumption}{Assumption}
\theoremstyle{definition}
\newtheorem{definition}{Definition}
\theoremstyle{remark}
\definecolor{orange}{rgb}{0.995, 0.75, 0.35}
\definecolor{purple}{rgb}{0.7, 0.2, 0.5}
\definecolor{royalblue}{rgb}{0.2, 0.7, 0.8}
\definecolor{darkgreen}{rgb}{0.2,0.725,0.25}
\def\al{\alpha}
\def\de{\delta}
\def\eps{\epsilon}
\def\ga{\gamma}
\def\veps{\varepsilon}
\def\om{\omega}
\def\De{\Delta}
\def\Om{\Omega}
\def\br{{\bf r}}
\def\curl{\mathrm{curl}}
\def\dive{\mathrm{div}\,}
\def\iy{\infty}
\def\cL{\mathcal{L}}
\def\pa{\partial}
\def\sH{\mathscr{H}}
\def\To{\Rightarrow}
\newcommand{\la}{\langle}
\newcommand{\ra}{\rangle}
\newcommand{\inv}{^{-1}}
\newcommand{\td}{\tilde}
\newcommand{\rks}{{\bf Remarks:}\ \ }
\newcommand{\rk}{{\bf Remark.}\ \ }
\newcommand{\nd}{\noindent}
\newcommand{\R}{\mathbb{R}}
\begin{document}
\title[Nonlinear Schr\"odinger Equations for BEC]{Nonlinear Schr\"odinger Equations for Bose-Einstein Condensates}

\subjclass[2010]{35Q55, 65M70}
\keywords{nonlinear Schr\"odinger equation, BEC, electromagnetic potential}

\author{Luigi Galati}
\author{Shijun Zheng}
\address[Luigi Galati and Shijun Zheng]
{Department of Mathematical Sciences\\
        Georgia Southern University\\
         Statesboro, GA 30460-8093
         }

\begin{abstract}
The Gross-Pitaevskii equation, or more generally the nonlinear Schr\"odinger equation, models the Bose-Einstein condensates in a macroscopic gaseous superfluid wave-matter state in ultra-cold temperature.
We provide analytical study of the NLS  with $L^2$ initial data 
in order to understand propagation of the defocusing and focusing waves for the BEC mechanism in the presence of  electromagnetic fields. Numerical simulations are performed for the two-dimensional 
 GPE with anisotropic quadratic potentials. 




\end{abstract}



\maketitle




\section{Introduction}  
 Consider the nonlinear Schr\"odinger  equation (NLS)
\begin{align}\label{e:u_A.V-mu.p}
&iu_t =-\frac12(\nabla-iA)^2 u+Vu+\mu |u|^{p-1}u \qquad (t,x)\in \R^{1+n}\\
&u(0,x)=u_0 ,
\end{align}
where $1\le p<\iy$, $\mu\in\R$, $V:\R^n\to \R$ induces the electric field $-\nabla V$, and $A=(A_1,\dots,A_n): \R^n\to \R^n$ induces the magnetic field
 $B=\nabla\wedge A=(\pa_jA_k-\pa_kA_j)_{n\times n}$. 
 Denote
$\De_A=\nabla_A^2:=\sum_{j=1}^n (\frac{\pa}{\pa x_j}-iA_j)^2$.
Then  $\cL:=-\frac12\De_A+V$ is an essentially selfadjoint Schr\"odinger operator with an electromagnetic potential $(A,V)$ that is gauge-invariant.
The nonlinear term $F(u)=\mu |u|^{p-1}u$ has the 
property $\Im(\bar{u}F(u))=0$.  

The physical significance for NLS in a magnetic field is well-known in   
  nonlinear optics and Bose-Einstein condensate (BEC), 
where the magnetic structure is involved in scattering, 
superfluid, quantized vortices 
as well as
DNLS  in plasma physics \cite{MAHHWC99,Gross1961,Pitae1961}. 
There have been produced BEC where Bosons, Femions or other quasi-particles are trapped with atomic lasers in order to observe the macroscopic coherent wave matter in  ultra-cold temperature.

The Hamiltonian
$H:=\int \frac{\hbar^2}{2m} |\nabla_A u|^2+ \frac{2\mu}{p+1}|u|^{p+1}$
  generates the nonlinear system in  \eqref{e:u_A.V-mu.p}:
\begin{align}
&i\hbar \frac{\pa u}{\pa t}=\frac{\de H}{\de \bar{u}}\label{e:hu-hamilton.variation}
\end{align}
with $\hbar=m=1$ ($\hbar$ being the Planck constant and $m$ the mass of a particle),
where we note that the adjoint of the covariant gradient $\nabla_A$ is $-\nabla_A$.  
 When $p=3$, we obtain the Gross-Pitaevskii equation (GPE), 
 which is regarded as a 
{Ginzburg-Landau model} in string theory.
 In general, the operator $\nabla_A^2=(\nabla-iA)^2$ contains components of both the (trapping) angular momentum and  (attractive/repulsive) potential that can affect the dispersion of NLS.
The equations $\eqref{e:u_A.V-mu.p}=\eqref{e:hu-hamilton.variation}$ play the role of Newton's law in classical mechanics \cite{Sq08}.


 In the state of superfluid, the gaseous BEC has the vortices phenomenon which arises from (in the focusing case) the bound states of the form $u=e^{i\ga t}Q$, $Q(x)= e^{i m\theta }R_m(r)$ being an excited state. 
Another situation where it appears is when we test or manipulate the BEC by a magnetic trap with rotation. The  wave function for the condensate is the solution of the following NLS  
\begin{align}\label{e:nlsV-L_Om}
&i\pa_t u =-\frac{1}{2}\nabla^2u+ \mu |u|^{p-1}u + \td{V}u-\Om\cdot Lu ,
\end{align}
where the rotation term $\Om\cdot L=-i\Om\cdot(x\wedge \nabla)$, $\Om=(\om_1,\dots,\om_n)\in \R^n$
and $L$ denotes the angular momentum operator  
\cite{HHsiaoL07b, ChC07,AMS10}.
Comparing \eqref{e:u_A.V-mu.p} and \eqref{e:nlsV-L_Om}
one finds  
\begin{align}
&\td{V}(x)= \frac12|A(x)|^2+V(x)+ \frac{i}{2} \dive A(x) \label{e:tdV-AV} \\
&i\Om\cdot (x\wedge\nabla)= iA(x)\cdot \nabla .\label{e:Om-A32}
\end{align}
If $n=3$, then  $\dive A=0$ with $A=(\om_2 x_3-\om_3  x_2,
\om_3 x_1-\om_1x_3, \om_1 x_2-\om_2x_1)$. 
As a $2$-form  $B=\curl \;A$ 
 is constant. 
A simple calculation shows that the tangential component of $B$ is
\begin{align*} &B_{\tau}=B\cdot \frac{\textbf{r}}{r}=\begin{pmatrix} 0& 2\om_3& -2\om_2\\
-2\om_3&0&2\om_1\\
2\om_2&-2\om_1 &0
\end{pmatrix} \cdot\frac{{\bf r}}{r}
= -\frac{2}{r}A ,
\end{align*}
where ${\bf r}=(x_1,x_2,x_3)$ and $r=|{\bf r}|$.
This tells that $B$ is a ``trapping" field whenever $\Om\ne 0$. Heuristically $B_\tau\ne 0$ indicates an obstruction to the dispersion 
\cite{F09}.  In $\R^3$, if  the Coulomb gauge $\dive A=0$, then one can recover $A$ as a ``weighted wedge" of $x$ and $B$ 
\begin{align*}
&A(x)=\frac{1}{4\pi}\int \frac{x-y}{|x-y|^3}\wedge  B(y)dy.
\end{align*}



Let $\td{V}(x)=\frac{1}{2}\sum_{j} \ga_j^2 x_j^2$ and $|\ga|=(\sum_j \ga_j^2)^{1/2}$. 
When $|\Om| \ll |\ga|$, the rotation action is negligible, and the potential $\td{V}$ is more predominant so that
$A\approx 0$, $V\approx \td{V}=\frac12 \sum_j \ga_j^2 x_j^2$. In this case  we anticipate trapping.
When $|\Om| \gg |\ga|$, the rotation is much stronger than $\td{V}$ so that the effect of $\td{V}\approx 0\To$
$V\approx -|A|^2/2$. 
This suggests that the wave function of a rotating BEC may be subject to an anisotropic repulsive potential.
In this case the dispersion might hold global in time so that the (focusing) nonlinearity turns to be ``short range"
 resulting in 
  scattering \cite{LiuT04,Car05,AMS10}.
Geometrically, the $x^2$ potential affects the wave like the trapping condition, which is stable, on a spherical portion of a manifold,
while the $-x^2$ potential affects the wave like the scattering (non-trapping) condition on a hyperbolic portion of a manifold, which can be unstable locally in time but stable global in time.




The main analytical interest of this paper is to study the $L^2$ solution of \eqref{e:u_A.V-mu.p} under the following assumptions on $A$ and $V$ throughout this section.

\begin{assumption}\label{a:A-lin-V-quadr}
 Let $A_j$ and $V$ be real-valued and belong to $C^\iy(\R^n)$.  Let $V$ be bounded from below. \\
   Assume $A=(A_j)_{j=1}^n$ is sublinear and $V$ subquadratic, namely, 
\begin{align*}
&\pa^\al A_j(x)=O(1),  \quad\forall \,|\al| \ge 1, \\
&\pa^\al V(x)=O(1),   \quad\forall \,|\al| \ge 2 
\end{align*}
as $|x|\to \iy$.  In addition, assume there exists some $\veps>0$ such that for all $|\al | \ge 1$
\begin{align*} |\pa^\al B(x)|\le c_\al \la x\ra^{-1-\veps} ,
\end{align*}
where  $B= (b_{jk})_{n\times n}$, $b_{jk}= \pa_j A_k- \pa_k A_j$.
\end{assumption}

Define the $\cL$-Sobolev space $\sH^{s,r}:
=\{u: \nabla^s u\in L^r, \la x\ra^s u\in L^r  \}$, where $\la x\ra=(1+|x|^2)^{1/2}$. When $r=2$,
we will also use the abbreviation $\sH^1=\sH^{1,2}$.
For $u_0$ in $\sH^1$, local wellposedness of \eqref{e:u_A.V-mu.p} was  proven for $1\le p<1+4/(n-2)$
  e.g., in \cite{deB91,Na01,Mi08} based on the fundamental solution constructed in \cite{Ya91}.  The $\sH^s$ subcritical result was considered in \cite{Z12} for $1\le p<1+4/(n-2s)$.
When $s=1$,  the following are known: 
 Let $u_0\in \sH^1$, $r=p+1$ and  $q=\frac{4p+4}{n(p-1)}$.
\begin{enumerate}
\item Let $1\le p< 1+4/(n-2)$. Then in the defocusing case $\mu>0$, \eqref{e:u_A.V-mu.p} has an $\sH^1$-bounded  global solution in
$C(\R,\sH^1)\cap L^q_{loc}(\R,\sH^{1,r})$.
 In the focusing case $\mu<0$, if $1\le p<  1+4/d$, then \eqref{e:u_A.V-mu.p} has an $\sH^1$-bounded global solution in
$C(\R,\sH^1)\cap L^q_{loc}(\R,\sH^{1,r})$.
\item Let $p=1+4/(n-2)$, $n\ge 3$.
  If $\Vert u_0\Vert_{\sH^1}< \veps$ for some $\veps=\veps(n,|\mu|)$ sufficiently small,
then \eqref{e:u_A.V-mu.p} has a unique local solution in $C((-T,T),\sH^1)\cap L^q((-T,T),\sH^{1,r})$ for some $T>0$.
\end{enumerate}

In two and three dimensions similar results on the $\sH^1$ subcritical problem for  \eqref{e:nlsV-L_Om}  have been obtained in \cite{HHsiaoL07a, HHsiaoL07b, AMS10}.
The main theorem (Theorem \ref{t:gwp-L2smalldata}) we state below is the global wellposedness of \eqref{e:u_A.V-mu.p} for $L^2$ initial data
by virtue of the maximal Strichartz norm. 
This strengthens Theorem  3.3  in \cite{Z12}.

\begin{definition}\label{de:adm.pair-qr} We call $(q,r)=(q,r,n)$ an admissible pair if $q,r\in [2,\iy]$ satisfy
 $(q,r,n)\neq (2,\iy,2)$ and
\begin{align*}
\frac{2}{q}+\frac{n}{r}=\frac{n}{2}.
\end{align*}   
\end{definition}


\begin{definition}\label{de:max-stri.qr} Let $I\subset \R$ be an interval. 
The Strichartz space $S^0(I):=S^0(I\times \R^n)$ is a Banach space consisting of functions in $\cap_{(q,r)\;admissible}L^qL^r(I\times\R^n)$ satisfying
\begin{align*} \Vert u \Vert_{S^0(I)}:= \sup_{(q,r)\; admissible} \Vert u\Vert_{L^qL^r(I\times\R^n)}<\iy.
\end{align*}
 Define $ N^0(I)$ to be the linear span of $ \cup_{(q,r)\;admissible} L^{q'}L^{r'}(I\times \R^n)$, where $q'=q/(q-1)$ is the H\"older conjugate of $q$.  If $n\ge 3$, the admissible pairs include the endpoint pair $(2,\frac{2n}{n-2})$, which allows us to
identify $S^0(I)$  with $L^\iy L^2\cap L^2L^{\frac{2n}{n-2}}(I\times\R^n)$ through interpolation.
 In this case
$ N^0(I)=L^1L^2+ L^2L^{\frac{2n}{n+2}}(I\times \R^n)$ and $N^0(I)$ is endowed with the norm
\begin{align*} \Vert f\Vert_{N^0(I)}=\min_{f=f_1+f_2} \left(\Vert f_1\Vert_{L^1L^2(I\times\R^n)} +\Vert f_2\Vert_{L^2L^{\frac{2n}{n+2}}(I\times \R^n)} \right),
\end{align*}
where the infimum is taken over all possible $f_1\in L^1L^2(I\times \R^n)$ and $f_2\in L^2L^{\frac{2n}{n+2}}(I\times \R^n)$ such that $f=f_1+f_2$ \cite{LiZh10}.
\end{definition} 

\begin{theorem}\label{t:gwp-L2smalldata} Let $A$  and $V$ satisfy the conditions in Assumption \ref{a:A-lin-V-quadr}.  Suppose $u_0\in L^2(\R^n)$.
\begin{enumerate}
\item 
If $1\le p< 1+4/n$, then equation \eqref{e:u_A.V-mu.p}
has a unique solution $u$ in $ C(\R, L^2(\R^n))\cap S^0_{loc}(\R\times \R^n)$.
Furthermore, for any $R>0$ there exists $T_R>0$ such that  the flow $u_0\mapsto u$ is Lipschitz continuous from $ \mathcal{B}_R$ into $S^0((-T_R,T_R))$.
\item If  $p=1+4/n$, then there exists an $\veps>0$ such that
  $\Vert u_0\Vert_2<\veps$ implies that
 equation \eqref{e:u_A.V-mu.p}  has a unique solution $u$ in $C(\R, L^2(\R^n))\cap S^0_{loc}(\R\times \R^n)$.
 The flow $u_0\mapsto u$ is Lipschitz continuous from $\mathscr{B}_{\eps/2}$ into $S^0((-T_0,T_0))$.
\end{enumerate}
In both cases,  it holds  that for all $T>0$, 
\begin{align*}
\Vert u\Vert_{S^0( (-T,T))}\le c T .
\end{align*} 
 In the above, $\mathscr{B}_R:=\{u: \Vert u\Vert_2\le R\}$, $T_0=T_0(A,V)$, and
 $\veps$ and $c$ are constants depending on $n$, $\mu$ and $\Vert u_0\Vert_2$ only.
 \end{theorem}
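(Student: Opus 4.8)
The plan is to recast \eqref{e:u_A.V-mu.p} as a fixed point for the Duhamel map
\begin{align*}
\Phi(u)(t) = e^{-it\cL}u_0 - i\mu \int_0^t e^{-i(t-s)\cL}\bigl(|u|^{p-1}u\bigr)(s)\,\ud s ,
\end{align*}
writing $U(t):=e^{-it\cL}$, and to solve it by a contraction in a ball of $S^0(I)$ over a short interval $I=(-T,T)$. The only genuinely analytic input is a \emph{local-in-time} Strichartz estimate for the electromagnetic propagator, namely
\begin{align*}
\Vert U(t)u_0\Vert_{S^0(I)} \le C \Vert u_0\Vert_2 , \qquad \Bigl\Vert \int_0^t U(t-s)f(s)\,\ud s\Bigr\Vert_{S^0(I)} \le C\Vert f\Vert_{N^0(I)} ,
\end{align*}
with $C=C(A,V)$ uniform for $|I|\le\delta_0$ small. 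First I would obtain these from the fundamental solution of \cite{Ya91}: under Assumption \ref{a:A-lin-V-quadr} the kernel of $U(t)$ satisfies the free-type dispersive bound $\Vert U(t)\Vert_{L^1\to L^\iy}\lesssim |t|^{-n/2}$ for $|t|\le\delta_0$, whence the $TT^*$/Keel--Tao machinery yields the full admissible family, including the endpoint $(2,\tfrac{2n}{n-2})$ when $n\ge3$; taking the supremum over admissible pairs controls the maximal norm $S^0(I)$, which for $n\ge3$ is just the interpolation identification $S^0(I)=L^\iy L^2\cap L^2L^{2n/(n-2)}$ recorded in Definition \ref{de:max-stri.qr}.

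Next I would establish the nonlinear estimate. With $r=p+1$ and $q=\frac{4p+4}{n(p-1)}$ the pair $(q,r)$ is admissible, so $\Vert f\Vert_{N^0(I)}\le\Vert f\Vert_{L^{q'}L^{r'}(I)}$; since $\bigl\Vert|u|^{p-1}u\bigr\Vert_{L^{r'}_x}=\Vert u\Vert_{L^r_x}^p$ and $pq'\le q$ (with equality exactly at $p=1+4/n$), Hölder in time together with the pointwise bound $\bigl||a|^{p-1}a-|b|^{p-1}b\bigr|\lesssim(|a|^{p-1}+|b|^{p-1})|a-b|$ gives
\begin{align*}
\Vert |u|^{p-1}u\Vert_{N^0(I)} &\le |I|^{\theta}\Vert u\Vert_{S^0(I)}^{p} , \\
\Vert |u|^{p-1}u-|v|^{p-1}v\Vert_{N^0(I)} &\le |I|^{\theta}\bigl(\Vert u\Vert_{S^0(I)}^{p-1}+\Vert v\Vert_{S^0(I)}^{p-1}\bigr)\Vert u-v\Vert_{S^0(I)} ,
\end{align*}
where $\theta=1-(p+1)/q>0$ in the subcritical case and $\theta=0$ in the critical case. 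Combined with the Strichartz bounds, $\Phi$ maps the ball $\{\Vert u\Vert_{S^0(I)}\le 2C\Vert u_0\Vert_2\}$ into itself and contracts once $|I|^{\theta}\Vert u_0\Vert_2^{p-1}$ is small enough; the difference inequality also yields Lipschitz dependence $u_0\mapsto u$ into $S^0(I)$.

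In the subcritical case ($\theta>0$) the required smallness is achieved by shrinking $T$, so the existence length $\tau=\min(\delta_0,\tau(\Vert u_0\Vert_2))$ depends only on $R=\Vert u_0\Vert_2$, giving $T_R$. In the critical case ($\theta=0$) no time factor is available, so I would instead fix the length $\delta_0$ and use that $\Vert U(t)u_0\Vert_{S^0(I)}\le C\Vert u_0\Vert_2\le C\veps$ is small when $\Vert u_0\Vert_2<\veps$, which drives the scale-invariant contraction. To globalize I would invoke mass conservation: because $F(u)=\mu|u|^{p-1}u$ satisfies $\Im(\bar u F(u))=0$, the $L^2$ norm is conserved, $\Vert u(t)\Vert_2=\Vert u_0\Vert_2$, an a priori bound preserved along the flow. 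Since the local length $\tau$ (subcritical) and the smallness threshold (critical) depend only on this conserved quantity and not on the base time, the solution extends to all of $\R$ by concatenating steps of length $\tau\le\delta_0$. Finally the bound $\Vert u\Vert_{S^0((-T,T))}\le cT$ follows by partitioning $(-T,T)$ into $O(T/\tau)$ intervals $I_k$, bounding $\Vert u\Vert_{S^0(I_k)}\le c_0(\Vert u_0\Vert_2)$ on each, and using subadditivity of the maximal norm (for every admissible $(q,r)$, $\Vert u\Vert_{L^qL^r((-T,T))}\le\sum_k\Vert u\Vert_{S^0(I_k)}$, taking the maximum for $q=\iy$ and the $\ell^q\le\ell^1$ sum otherwise), so that $\Vert u\Vert_{S^0((-T,T))}\le(T/\tau)\,c_0=cT$.

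The main obstacle is the first step: securing the electromagnetic Strichartz estimates uniformly on intervals of a fixed small length, in particular the endpoint estimate and the simultaneous control of all admissible pairs encoded in the maximal norm $S^0$. Because no explicit kernel is available, this forces reliance on the parametrix of \cite{Ya91} and on the magnetic decay hypothesis $|\pa^\al B|\le c_\al\la x\ra^{-1-\veps}$, which is exactly the non-trapping condition ensuring that the propagator retains free-type dispersion for short times; once these are in hand the remaining contraction, conservation, and summation steps are routine.
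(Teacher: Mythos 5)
Your proposal is correct and follows essentially the same route as the paper: the short-time dispersive bound from Yajima's parametrix yields the local Strichartz/$S^0$--$N^0$ estimates (the paper simply cites \cite{deB91,Z12} rather than rerunning the $TT^*$/Keel--Tao argument), and your contraction with $(q,r)=\bigl(\tfrac{4p+4}{n(p-1)},\,p+1\bigr)$, time factor $|I|^{\theta}$ with $\theta=\tfrac{4-n(p-1)}{4}$ in the subcritical case, the scale-invariant small-data argument at $p=1+4/n$ (where your pair reduces to the paper's $q=r=\tfrac{2n+4}{n}$), globalization via $L^2$ conservation with a threshold depending only on $n,\mu$, and the interval-concatenation bound $\Vert u\Vert_{S^0((-T,T))}\le cT$ all coincide with the paper's proof. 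No gaps; your summation argument for the linear-in-$T$ growth even spells out a detail the paper leaves implicit.
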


In the focusing ($\mu<0$), $L^2$ critical or supercritical but energy subcritical regime $1+4/n\le p<1+4/(n-2)$, there can occur finite time blowup solutions for \eqref{e:u_A.V-mu.p}, see e.g., \cite{deB91,Car02c,Sq08}.  
Such situation is more complicated, where the occurrence of wave collapse is equivalent to the existence of soliton,
which depends on
the interaction between linear and nonlinear energies, the expectation of momentum as well as the profile of the initial data.
For the rotating problem \eqref{e:nlsV-L_Om}, wave collapse can occur for either cases
where $|\Om|\ll |\ga|$ or $|\Om|\gg |\ga|$. In \cite{AMS10} blowup conditions are given in terms of $(\Om\cdot L)V$.
More recently, Garcia \cite{Gar12} obtained a general blowup criteria for \eqref{e:u_A.V-mu.p} based on
 spectral properties of $A$ and $V$.

It is desirable to observe numerical results that can experimentally verify the theory.
In Section 4 
we apply the  Strang splitting scheme to find numerical solutions
for the GPE \eqref{e:u_A.V-mu.p} in 2D (a cubic NLS)
where we take $A=0$ and $V(x_1,x_2)=\frac12\sum_{j=1}^2 \de_j \ga_j^2 x_j^2$,
$\de_j\in \{\pm 1\}$.   
Our algorithm and implementations are based on time-splitting Fourier-spectral methods developed in \cite{
 bao2002time} 
 and GPELab \cite{GPElab}.
Such scheme is stable and has higher accuracy under appropriate conditions on $V$ and initial data,
see 
\cite{Lubich2008splitting, LuM13}.
 Numerical schemes typically use spectral or pseudo-spectral method to approximate the solution
 by discretizing spacial dimensions and then advancing a time step,   
while physicists  have used e.g., Crank-Nicholson method 
via 
Lagrangian variational techniques \cite{ruprecht95, BEdKSC2012}. 

The organization of the remaining of the paper is as follows. In Section 2, the time dependent Gross-Pitaevskii equation, in particular the electromagnetic GPE,  is introduced and  formally derived as mean field approximation for the $N$-particle state of the BEC.
 In Section 3, we prove
 Theorem \ref{t:gwp-L2smalldata}, mainly in the $L^2$-critical case, concerning global wellposedness of \eqref{e:u_A.V-mu.p}.
In Section 4, 
 we present numeral simulations to  illustrate  the focusing and defocusing nonlinear effects on the wave function of BEC subject to various anisotropic  harmonic potentials.  

\section{Formal derivation of the Gross-Pitaevskii equation}\label{s:form.deriv-GPE}
 In the early stage of  quantum mechanics there arose questions concerning
 fundamental aspects  
of decoherence and measurement theory as well as understanding the correlation between classical and quantum scattering models.   In 1924, Satyendra Nath Bose published a paper describing the statistical nature of light \cite{BOSE24}.  Using Bose's paper, Albert Einstein predicted a phase transition in a gas of noninteracting atoms could occur due to these quantum statistical effects \cite{Ein24,Ein25}.  This phase transition period, Bose-Einsten Condensation, would allow for a macroscopic number of non-interacting bosons to simultaneously occupy the same quantum state of lowest energy. 

It wasn't until 1938, with the discovery of superfluidity in liquid helium, that F. London conjectured that this superfluidity may be one of the first manifestations of BEC. 
 The real breakthrough came in 1995,  when the BEC were produced from a vapor of
 rubidium, and of sodium atoms \cite{And1995,Davis1995}. 

The Gross-Pitaevskii equation \eqref{e:u_A.V-mu.p}, $p=3$  
describes  the macroscopic wave functions $u$ of the condensate 
in the presence of  the magnetic and electric potentials $A$ and $V$.  
The nonlinear term results from the mean field interaction between atoms. 
   The constant $\mu$ accounts for the attractive ($\mu<0$)  or repulsive ($\mu>0$) interaction,
   whose sign depends on the chemical elements.

Nowadays BEC can be simulated in the computer and the lab. 
The rotating BEC, for instance, involves  the decoherence $\leftrightarrow$ coherence phase.
The angular momentum operator breaks up the beams, hence split the spectral lines when performing the experiment on silver atoms in normal state.
It can help create quasi-particles so to manipulate or observe not only the macroscopic atoms, but also individual particle.
There are potential  applications in higher degree precision for measurement, navigation, computing and communications.

\subsection{A formal derivation} We follow a mean-field approach to derive the time-dependent GPE for the $N$-body system of bosons. 
At ultra low temperatures, all bosons exist in identical single-particle state $\phi({\bf r})$, ${\bf r}\in \R^3$ and so  we can write the wave function of the $N$-particle system as 
\begin{equation}\label{manybodied}
\Psi({\bf r}_1, {\bf r}_2, \ldots ,{\bf r}_N) = \prod_{i=1}^N \phi({\bf r}_i).
\end{equation}
The single-particle wave function $\phi({\bf r})$ obeys the typical normalization condition
\begin{equation*}
\int_{\mathbb{R}^3} |\phi({\bf r})|^2 d\mathbf{r} = 1.
\end{equation*}
Due to the fact that we are dealing with dilute gases, the distance between any two particles in positions ${\bf r}$ and ${\bf r}'$ is such that the only interaction term is  $U_0 \delta ({\bf r} - {\bf r}')$, where $\de$ is the usual Dirac function and $U_0 = \frac{4\pi \hbar^2 a}{m}$ is the strength of effective contact interaction ($a$ being the scattering length). 
    Thus the Hamiltonian reads
\begin{equation*}
H_N = \sum_{i=1}^N \left[ \frac{{\bf p}_i^2}{2m} + V({\bf r}_i) \right] + U_0 \sum_{i<j} \delta ({\bf r}_i - {\bf r}_j) ,
\end{equation*}
where ${\bf p}=-i\hbar\nabla$ stands for the momentum and
$V({\bf r})$ the external potential.  Meanwhile the $N$-state (\ref{manybodied}) has energy
\begin{equation}\label{energy}
E_N = N \int_{\R^3} \left[ \frac{\hbar^2}{2m}| \nabla \phi({\bf r})|^2 + V({\bf r})|\phi({\bf r})|^2 + \frac{(N-1)}{2} U_0|\phi({\bf r})| ^4\right]d{\bf r} ,
\end{equation}
where the nonlinear energy term is attributed to the inherent self-interaction and interaction between a pair of  bosons on the same state
\begin{align*}
&\int_{\R^6} U_0\de({\bf r}_i-{\bf r}_j)
\la \phi({\bf r}_i)\vert \de({\bf r}_i-{\bf r}_i')\phi({\bf r}'_i) \ra  \la \phi({\bf r}_j)\vert  \de({\bf r}_j-{\bf r}'_j)\phi({\bf r}'_j)\ra d{\bf r}_i  d{\bf r}_j \notag\\
=&\int_{\R^6} U_0\de({\bf r}_i-{\bf r}_j)
| \phi({\bf r}_i) |^2   | \phi({\bf r}_j)|^2  d{\bf r}_i  d{\bf r}_j =\int_{\R^3} U_0 | \phi({\bf r}_i) |^4  d{\bf r}_i \,.\label{e:delta-phi.N}
\end{align*}
This is equivalent to an expression in terms of the expectation of the collision contact. 

Introduce the wave function for the condensed state
\begin{equation*}
\psi({\bf r}) = N^{1/2}\phi({\bf r})
\end{equation*}
so that  $N=\int |\psi|^2 d{\bf r} $. 
By a variation argument for $E_N$, similar to \eqref{e:hu-hamilton.variation} we formulate the GPE as $N\to \iy$
\begin{equation}\label{e:condwavefunc}
i\hbar \frac{\pa\psi}{\pa t} = -\frac{\hbar^2}{2m} \nabla^2 \psi + V({\bf r})\psi + U_0|\psi|^2 \psi .
\end{equation}

\begin{proof}[Derivation of magnetic NLS]  In a similar way we can formally derive \eqref{e:u_A.V-mu.p} for $p=3$.
Let 
$A \in L_{loc}^2(\R^3,\R^3)$, $V: \R^3\to \R$. 
Assume an $N$-particle weakly interacting condensate of non-relativistic bosons without spin 
 in the mean field.
The Hamiltonian in the electromagnetic 
frame has the form on $\R^{3N}$
\begin{align*}&H_{N}= \sum_{\iota=1}^N \left(-\frac{\hbar}{2m}\nabla_{A,\iota}^2+ V(\br_\iota)\right) + \mu \sum_{\iota<j}^N g(\br_\iota-\br_j) ,
\end{align*}
 where $\nabla_A=\nabla-iA $ is the covariant gradient on $\R^3$,
 $V$ 
 represents the external potential, 
$\mu g$ the inherent potential for a two-body bosons, that is, the interaction between two particles 
is given by $\mu g(\br-\br')$.
 Using  the fact that 
the expectation at $(t,\br)$
of the interaction from the $\iota$-th particle is
$\mu\int_{\R^3} g(\br-\br_\iota) |\psi(t,\br_\iota)|^2 d\br_\iota$  
we arrive at the GPE that decries the wave function of the condensate 
 \begin{align*}&i\hbar\frac{\pa}{\pa t}\psi= -\frac{\hbar^2}{2m}\De_{A}\psi + V\psi + \mu  (g*|\psi|^2)\psi.
 \end{align*}
In the case $g=\de$ where only local contact interaction from collision is accounted for while other interactions are neglected in a  dilute gas, the equation  becomes the standard magnetic cubic NLS.
\end{proof}

\nd\rk The derivation above relies on the fact that
 the $N$ particles of a dilute gas are condensed in the same state for which the wave function minimized
the 
energy.
 The note \cite{Gr09} contains derivation and discussions of the magnetic GPE in the physical setting.
For rigorous derivation of the mean field limit of the $N$-particle coherent state as $N\to  \iy$ as well as $t\to \iy$ involving ground state trapping and scattering (dispersion) we refer to \cite{FrL04, ESY07a}.

\paragraph{GPE with harmonic potential and angular momentum}  In \eqref{e:condwavefunc}, 
 $|\psi(t,x)|^2$ denotes the probability density of the condensate at $(t,x)$.
The coefficient $\mu$ measures the strength of interaction  and depends on a quantity called the $s$-scattering length.
It has positive sign  (defocusing) for $\tensor*[^{87}]{Rb}{}$,
$\tensor*[^{23}]{Na}{}$, $\tensor*[^1]{H}{}$ atoms, but
negative sign (focusing) for $\tensor*[^7]{Li}{}$,
$\tensor*[^{85}]{Rb}{}$, $\tensor*[^{133}]{Cs}{}$ 
\cite{WTs98,Car02c}.  The typical example $V=\frac12\sum_j \ga_j^2 x_j^2$
 represents an external
trapping potential imposed by a system of laser beams, 
 where $\ga_1,\gamma_2, \gamma_3$ are the magnitudes of the frequencies of the oscillator in three directions.
It works as  
 an anisotropic trap that allows one to observe the behavior of macroscopic waves
 traveling along a waveguide with varying width or excitations when a BEC is released from a trap.


With the addition of a rotation term  
 we arrive at the GPE in \eqref{e:nlsV-L_Om}.
 This equation is viewed as a conservation for the angular momentum on a quantum level that involves  Newton's law and Lorentz force  where the magnetic field is divergence free.
The momentum operator $L_\Om:=i\Om \cdot (x\wedge \nabla)$
with non-vanishing angular velocity $\Om$ 
  gives rise to  vortex lattices in a condensate that supports it in turn,  e.g., one can obtain the vortex lattices of a 
 BEC by setting the $Na$ condensate in rotation  using laser beams \cite{AKe02,MuH02}.

 The study of BEC as a rotating superfluid 
 leads to the quantization of circulation and quantized vortices. Physically this makes it impossible for a superfluid to rotate as a rigid body:\,In order to rotate, it must swirl \cite{AKe02}.  
 The existence of quantized vortices with such particular pattern  has  been verified by experiments and numerics,  see e.g.,
 \cite{MAHHWC99,
 BR2013} and  \cite{
 ADu01,ChC07}. 
They can be observed in a condensate with either
optical  or magnetic traps.

\section{The $L^2$ solution using maximal Strichartz norm}
  This section is devoted to the proof of Theorem \ref{t:gwp-L2smalldata}. We let $A$ and $V$ satisfy Assumption \ref{a:A-lin-V-quadr}.
A priori, note that equation \eqref{e:u_A.V-mu.p} has the conservation of mass and 
 energy on its lifespan
\begin{align}
&\Vert u(t)\Vert_2=\Vert u_0\Vert_2 \label{e:L2-conserv.u}\\
&E(t):= \int (\cL u) \bar{u} dx+\frac{2\mu}{p+1}\int |u|^{p+1} dx\notag\\
=&\la \cL u,u \ra+  \frac{2\mu}{p+1}\Vert u\Vert_{p+1}^{p+1}=E(0) .   \label{e:E(t)-H.conserv}
\end{align}

Let $u$ and $F$ be $L^2\cap L^r(\R^n)$-valued functions in $t \in I$,  $I$ an interval in $\R$. If $u$ solves
\begin{align}\label{e:iu-L.F_vec} &i u_t=\cL u+F(t),\qquad
 u(0)=u_0\in L^2(\R^n),
\end{align}
then the solution can be expressed in an integral form according to Duhamel principle
\begin{align}  &u=(i\pa_t-\cL)\inv F \notag\\
=&e^{-it\cL} u_0-i \int_0^t e^{-i(t-s)\cL} F(s)ds. \label{e:duhamel-U(t).u.F}
\end{align}

 From \cite{Ya91} we know  
  there exists $T_0$ such that
 for $0<|t|<T_0$ the propagator $U(t):=e^{-it\cL}$ is given as 
 \begin{align}\label{e:propagator-L-int}
 &U(t)f(x)= (2\pi it )^{-n/2}\int e^{iS(t,x,y)}a(t,x,y) f(y)dy, \end{align}
 where  $S(t,x,y)$ is a  real solution of the Hamilton-Jacobi equation,
both $S$ and $a$ are $C^1$ in $(t,x,y)$ and $C^\iy$ in $(x,y)$, with $|\pa_x^\al \pa_y^\beta a(t,x,y)|\le c_{\al\beta}$
for all $\al,\beta$.  Write $I:=I_{T_0}=[-T_0,T_0]$ and  $L^q L^r (I\times \R^n)=L_t^q  (I, L_x^r(\R^n))$.

\begin{lemma}[Strichartz estimates \cite{deB91,Z12}]\label{l:disp-Stri-L} If $A$ and $V$ satisfy Assumption \ref{a:A-lin-V-quadr},
then we have for $I=[-T_0,T_0]$, 
there exist  constants $c_q,c_{q,\td{q}}$ 
such that  \begin{itemize}
\item[] \begin{equation}\label{e:u-pq-f_L2}
\|U(t)f \|_{L^{q}L^r(I\times\R^n )} \le c_{q}\|f\|_{2 }
\end{equation}
\item[] \begin{equation}\label{e:inhom-stri_L2}
\|\int_0^t U(t-s)F(s,\cdot)ds \|_{L^qL^r(I\times\R^n )}\le  c_{q,\td{q}}\|F\|_{L^{\td{q}'}L^{\td{r}'}(I\times\R^n )} ,
\end{equation}
where  $(q,r), (\td{q}, \td{r})$ are any admissible pairs, and $q'$ is the H\"older conjugate of $q$.
\end{itemize}
\end{lemma}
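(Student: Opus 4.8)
The plan is to obtain both Strichartz inequalities from the short-time parametrix \eqref{e:propagator-L-int} by combining a fixed-time dispersive estimate with $L^2$ conservation, and then to feed these two ingredients into the abstract Keel--Tao machinery. First I would read off a pointwise kernel bound from \eqref{e:propagator-L-int}: since the amplitude obeys $|a(t,x,y)|\le c_{00}$ uniformly (the $\al=\beta=0$ case of $|\pa_x^\al\pa_y^\beta a|\le c_{\al\beta}$), the Schwartz kernel $K(t,x,y)=(2\pi it)^{-n/2}e^{iS(t,x,y)}a(t,x,y)$ of $U(t)$ satisfies $|K(t,x,y)|\le (2\pi|t|)^{-n/2}c_{00}$ for $0<|t|<T_0$. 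This gives at once the dispersive estimate
\[ \Vert U(t)f\Vert_\iy\le C|t|^{-n/2}\Vert f\Vert_1,\qquad 0<|t|<T_0. \]

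Since $\cL$ is essentially selfadjoint, $U(t)=e^{-it\cL}$ is a unitary group on $L^2$, so $\Vert U(t)f\Vert_2=\Vert f\Vert_2$. Interpolating these two bounds (Riesz--Thorin) yields the full decay family
\[ \Vert U(t)f\Vert_r\le C|t|^{-n(\frac12-\frac1r)}\Vert f\Vert_{r'},\qquad 2\le r\le\iy,\ 0<|t|<T_0. \]
I would then apply the Keel--Tao theorem with Hilbert space $L^2(\R^n)$ and decay exponent $\sigma=n/2$, using the group identity $U(t)U(s)^*=U(t-s)$ so that the two displays above are exactly the energy and decay hypotheses. The sharp $\sigma$-admissibility condition $\frac1q+\frac{\sigma}{r}=\frac{\sigma}{2}$ reduces precisely to $\frac2q+\frac nr=\frac n2$ of Definition \ref{de:adm.pair-qr}, and the excluded case $(q,r,\sigma)=(2,\iy,1)$ matches the excluded $(2,\iy,2)$ there. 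This delivers \eqref{e:u-pq-f_L2} together with the retarded inhomogeneous estimate; the latter directly controls the Duhamel integral $\int_0^t$ (both time directions by symmetry), giving \eqref{e:inhom-stri_L2}, including the endpoint pair $(2,\frac{2n}{n-2})$ when $n\ge3$.

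The main obstacle is that the parametrix \eqref{e:propagator-L-int}, and hence the decay bound, holds only on the short window $0<|t|<T_0$ fixed by \cite{Ya91}: beyond $T_0$ the classical Hamiltonian flow generated by the sublinear-magnetic, subquadratic-electric symbol can form caustics and the WKB representation fails. Two points then need care. First, the decay hypothesis of Keel--Tao involves $U(t-s)$ with $t,s\in I$, so $|t-s|$ may reach $2T_0$; to remain inside the validity window I would prove the estimates on $[-T_0/2,T_0/2]$ and relabel, or patch finitely many short-time estimates via the group property and the triangle inequality in the Strichartz norm. Second, the non-endpoint inhomogeneous bound can be routed through the Christ--Kiselev lemma (valid when $q>\td{q}'$), but the endpoint-to-endpoint retarded estimate lies outside its range and must instead be taken directly from the bilinear part of Keel--Tao. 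Everything else is routine; crucially, the genuinely hard analytic input---the construction of the phase $S$ and an amplitude $a$ with uniformly bounded derivatives---is already supplied by \cite{Ya91}, so the present lemma reduces to bookkeeping together with the abstract Strichartz theorem.
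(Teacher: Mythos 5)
Your proof is correct and takes essentially the same route as the paper, which states the lemma without an inline argument and defers to \cite{deB91,Z12}, where the estimates are derived exactly as you propose: the uniform amplitude bound in Yajima's short-time parametrix \eqref{e:propagator-L-int} gives the $|t|^{-n/2}$ dispersive estimate, unitarity of $e^{-it\cL}$ plus Riesz--Thorin interpolation gives the full decay family, and the abstract Keel--Tao theorem (with the Christ--Kiselev lemma available for the non-diagonal, non-endpoint retarded pairs) yields both \eqref{e:u-pq-f_L2} and \eqref{e:inhom-stri_L2}. Your handling of the two delicate points --- that $|t-s|$ may reach $2T_0$ so one must shrink or patch the time window, and that the endpoint retarded estimate must come from the bilinear part of Keel--Tao rather than Christ--Kiselev --- is exactly the bookkeeping the cited references carry out.
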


The Strichartz estimates 
yield the following lemma, consult \cite[Chapter 3]{Tao06}.
\begin{lemma}\label{l:S-u.N-F}
 Let $u$ be a solution of \eqref{e:iu-L.F_vec}. 
  Then for any admissible pairs $(q,r)$, $(\td{q},\td{r})$ as in Definition \ref{de:adm.pair-qr}
we have
\begin{align}
\Vert u\Vert_{L^q L^r (I\times \R^n) }\le c_{q, \tilde{q}}(\Vert u_0\Vert_2
+\Vert F\Vert_{L^{\td{q}'} L^{\td{r}'} (I\times \R^n) } ). \label{e:u_qr-F_qr-glob}
\end{align}
Moreover, \begin{align}
&\Vert u\Vert_{S^0(I) }\le c_n (\Vert u_0\Vert_2
+\Vert F\Vert_{N^0(I) } ). \label{e:u-F-S.N_I}
\end{align}
\end{lemma}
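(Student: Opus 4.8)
The plan is to read everything off the Duhamel representation \eqref{e:duhamel-U(t).u.F} and feed it into the linear Strichartz bounds of Lemma \ref{l:disp-Stri-L}. For the first estimate \eqref{e:u_qr-F_qr-glob}, I would fix admissible pairs $(q,r)$ and $(\td q,\td r)$, write $u=U(t)u_0-i\int_0^tU(t-s)F(s)\,ds$, and apply the triangle inequality. The homogeneous piece $U(t)u_0$ is controlled by $c_q\Vert u_0\Vert_2$ via \eqref{e:u-pq-f_L2}, and the retarded Duhamel piece by $c_{q,\td q}\Vert F\Vert_{L^{\td q'}L^{\td r'}}$ via \eqref{e:inhom-stri_L2}; adding the two yields \eqref{e:u_qr-F_qr-glob} directly. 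No extra structure is needed here, since the integral appearing in \eqref{e:inhom-stri_L2} is exactly the one in \eqref{e:duhamel-U(t).u.F}.

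The estimate \eqref{e:u-F-S.N_I} requires assembling these bounds into the maximal norm $S^0(I)$ and the dual space $N^0(I)$. For $n\ge3$ I would exploit the interpolation identification of Definition \ref{de:max-stri.qr}: $\Vert u\Vert_{S^0(I)}$ is comparable to $\max(\Vert u\Vert_{L^\iy L^2},\Vert u\Vert_{L^2L^{2n/(n-2)}})$, because every admissible $(q,r)$ lies on the interpolation segment between the endpoints $(\iy,2)$ and $(2,\frac{2n}{n-2})$, so iterated H\"older gives $\Vert u\Vert_{L^qL^r}\le\Vert u\Vert_{L^\iy L^2}^{1-\theta}\Vert u\Vert_{L^2L^{2n/(n-2)}}^{\theta}$ with a constant independent of $(q,r)$. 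This reduces the claim to bounding the two endpoint norms. On the source side I would take an arbitrary decomposition $F=f_1+f_2$ with $f_1\in L^1L^2$ and $f_2\in L^2L^{2n/(n+2)}$, matching the two dual endpoint pairs $(1,2)$ and $(2,\frac{2n}{n+2})$.

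Then I would estimate each endpoint norm of $u$ against $u_0$, $f_1$, and $f_2$. The self-adjointness of $\cL$ makes $U(t)$ unitary on $L^2$, so the homogeneous part satisfies $\Vert U(t)u_0\Vert_{L^\iy L^2}=\Vert u_0\Vert_2$; all remaining contributions — the homogeneous term measured in $L^2L^{2n/(n-2)}$, and the Duhamel terms built from $f_1$ and $f_2$ in both endpoint norms — are direct instances of \eqref{e:u-pq-f_L2}–\eqref{e:inhom-stri_L2}, including the nontrivial dual endpoint estimate $L^2L^{2n/(n+2)}\to L^\iy L^2$. Summing and taking the infimum over decompositions $F=f_1+f_2$ turns the right-hand side into $\Vert F\Vert_{N^0(I)}$, while the interpolation step converts the left-hand side into $\Vert u\Vert_{S^0(I)}$, giving \eqref{e:u-F-S.N_I} with a constant $c_n$ depending only on $n$.

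The argument is essentially bookkeeping once Lemma \ref{l:disp-Stri-L} is granted; the one point that genuinely needs care — and the reason the maximal norm is usable at all — is the uniformity of the Strichartz constants across the full continuum of admissible pairs. This is precisely what the endpoint-plus-interpolation reduction secures: rather than trying to control $\sup_{(q,r)}c_q$ over a continuum, I only ever invoke \eqref{e:u-pq-f_L2}–\eqref{e:inhom-stri_L2} at the two endpoint pairs and let interpolation propagate the bound to all intermediate pairs with a single constant. For $n=1,2$ the endpoint is unavailable, so there $S^0(I)$ and $N^0(I)$ must be treated through the sup and infimum over all admissible pairs directly, and one would instead need a uniform-in-$(q,r)$ form of Lemma \ref{l:disp-Stri-L} to close the same computation.
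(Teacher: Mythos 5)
Your proof is correct and is essentially the paper's own argument: the paper proves this lemma only by pointing to Lemma \ref{l:disp-Stri-L} and \cite[Chapter 3]{Tao06}, and your Duhamel-plus-triangle-inequality step for \eqref{e:u_qr-F_qr-glob}, followed by the endpoint decomposition $F=f_1+f_2$ and the interpolation identification of $S^0(I)$ and $N^0(I)$ for $n\ge 3$ (which is exactly what Definition \ref{de:max-stri.qr} sets up), is the standard way to fill in that citation. One small correction to your closing caveat: for $n=1$ the pair $(4,\infty)$ \emph{is} admissible (only $(2,\infty,2)$ is excluded), so the same two-endpoint interpolation closes the argument there as well; it is only $n=2$ where the sup over admissible pairs genuinely requires a uniform-in-$(q,r)$ constant or a restriction to a compact family of exponents --- a point the paper itself also leaves unaddressed.
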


 Now we begin to prove part (2) in Theorem \ref{t:gwp-L2smalldata}.
\begin{proof}[Proof of (2) in Theorem \ref{t:gwp-L2smalldata}]
  (I)  Let 
  $p=1+4/n$.
According to  \eqref{e:u-F-S.N_I},  we have
\begin{align*}
\Vert u \Vert_{S^0(I)}\le c_n (\Vert u_0\Vert_{2}+ \Vert |u|^{\frac{4}{n}}u \Vert_{N^0(I)} ).
\end{align*}
Since $N^0(I)\supset \cup_{(q,r)\; admissible}L^qL^r(I\times \R^n)$ and $q=r={(2n+4)}/{n}$ are admissible,
it follows that 
\begin{align*}
&\Vert |u|^{\frac{4}{n}}u  \Vert_{N^0(I)} \le \Vert |u|^{\frac{4}{n}}u\Vert_{L^{(\frac{2n+4}{n} )'} (I \times \R^n)}\notag\\
=&\Vert |u|^{\frac{n+4}{n}} \Vert_{L^{\frac{2n+4}{n+4} } (I \times \R^n)}
 = \Vert u\Vert^{\frac{n+4}{n}}_{L^{\frac{2n+4}{n}}(I\times \R^n)}  .
\end{align*}
Hence we obtain
\begin{align}
\Vert u \Vert_{S^0(I)}\le  c_n(\Vert u_0\Vert_2+ \Vert u \Vert^{\frac{n+4}{n}}_{S^0(I)} ). \label{e:u_S-F_S}
\end{align}

(II) Let $\Vert u_0\Vert_2\le \veps:=\eta\ga=(2c_n)^{-1-n/4}\min(1, (5|\mu|)^{-n/4})$, where we choose $\eta=(2c_n)\inv$ and
$\ga= \min((2c_n)^{-n/4}, (10c_n|\mu|)^{-n/4})$.  In view of  \eqref{e:duhamel-U(t).u.F} we need to prove that the mapping
\begin{align}
 &\Phi(u):=
 U(t) u_0-i\mu \int_0^t U(t-s) (|u|^{p-1} u) ds\label{e:u-U(t)-u^p_duhamel}
\end{align}
is a contraction on the closed set $E_\ga=\{u\in S^0(I): \Vert u\Vert_{S^0(I)}\le \ga \}$.

(a)  In doing so first we show $\Phi$: $E_\ga\to E_\ga$. According to \eqref{e:u_S-F_S}
we have, for $u\in E_\ga$ 
\begin{align*}
  & \Vert \Phi(u)\Vert_{S^0(I)}\le c_{n}(\Vert u_0\Vert_{2}  +
  \Vert  u \Vert_{S^0(I)}^{\frac{n+4}{n}})\\
\le& c_n \eta \ga + c_n \Vert  u \Vert_{S^0(I)} \ga^{4/n}\le \frac{\ga}{2}+ \frac{\ga}{2}=\ga .
\end{align*}

(b) Then we show that $\Phi$ is contraction on $E_\ga$. Note the following inequality: For all $p>1$ 
\begin{align*}
& | |u|^{p-1} u -|v|^{p-1}v|  \le p (\max (|u|, |v|) )^{p-1} |u-v|\\
\le& p (|u|^{p-1}+ |v|^{p-1})|u-v|.
\end{align*}
 H\"older inequality gives, with $p=1+4/n$,
\begin{align*}
&  \Vert |u|^{4/n} (u-v)   \Vert_{L^{\frac{2n+4}{n+4}}(I\times \R^n)} \le \Vert u-v   \Vert_{L^{\frac{2n+4}{n}}(I\times\R^n)}  \Vert u   \Vert_{L^{\frac{2n+4}{n}}(I\times \R^n)}^{\frac{4}{n}}  .
\end{align*}
The same type of inequality holds with $|u|^{4/n} (u-v)$ replaced by $|v|^{4/n} (u-v)$.

Hence, applying Lemma \ref{l:S-u.N-F} we obtain, for $p=1+4/n$ and ${\td{q}}={\td{r}}=(2n+4)/n $
\begin{align*}
  &    \Vert \Phi(u)-\Phi(v)\Vert_{S^0(I )}\\
  = & \Vert \mu\int_0^t  e^{-i(t-s)\cL} ( |u|^{4/n} u-|v|^{4/n}v ) ds \Vert_{S^0(I)}\\
\le&  |\mu|c_{n}  \Vert   |u|^{4/n} u -|v|^{4/n}v  \Vert_{L^{\td{q}'} L^{\td{r}'}(I\times\R^n)} \\
\le&p|\mu|c_n\Vert u-v\Vert_{L^{\frac{2n+4}{n}}(I\times\R^n)}  (\Vert u\Vert_{L^{\frac{2n+4}{n}}(I\times\R^n)}^{\frac{4}{n}}+\Vert v\Vert_{L^{\frac{2n+4}{n}}(I\times\R^n)}^{\frac{4}{n}}).
\end{align*}
It follows that for $u,v\in E_\ga$ 
\begin{align*}
  &\Vert \Phi(u)-\Phi(v)\Vert_{S^0(I)}
   \le C_{n} \ga^{\frac{4}{n}} \Vert u-v \Vert_{L^{\frac{2n+4}{n}}(I\times\R^n)}
    \le \frac12\Vert u-v  \Vert_{S^0(I)}
\end{align*}
by the choice of $\ga$ above, where $C_n=2c_n p|\mu|$ . 
 Therefore we have proved that $\Phi$ has a fixed point in the set $E_\ga$.
  We conclude that if $\Vert u_0\Vert_2\le \veps$, there exists a unique solution
 $u$ in $L^\iy([-T_0,T_0],L^2(\R^n))\cap S^0([-T_0,T_0],\R^n)$. The global in time existence follows from the conservation of the $L^2$ norm \eqref{e:L2-conserv.u} by observing that $\veps$ only depends on $n$ and $\mu$.

 (III) 
The Lipschitz continuity is based on iteration of the contraction  $\Phi$,
see e.g., Proposition 1.38 
and Proposition 3.17 in \cite{Tao06}. 
\end{proof}

 \begin{proof}[Proof of (1) in Theorem \ref{t:gwp-L2smalldata}] Let $p<1+4/n$.
 The proof for the subcritical case follow the same line as for the critical case 
 but use the following:
 Choose $(q,r)=(\td{q},\td{r})=(\frac{4p+4}{n(p-1)}, p+1)$ to arrive at 
\begin{align*}
&\Vert Pu-Pv\Vert_{S^0(I)}
\le c_{n,p} |I |^\al
(4 \Vert u_0\Vert_2)^{p-1} \cdot  \Vert u-v\Vert_{L^q(I,L^r)}\\
\le& \frac12 \Vert u-v\Vert_{S^0(I)}, \end{align*}
if choosing $T=T(\Vert u_0\Vert_2)>0$ sufficiently small.
Here we notice that $\al=
 \frac{4- n(p-1)}{ 4}  >0\iff p< 1+4/n$.
\end{proof}

\nd\rks For  $A=V=0$ the analogous result was proven in \cite{Ts87a,CazW89,Caz03} using $L_t^qL_x^r$ norm.
The case where $A=0$ and $V$ is subquadratic or quadratic was treated in \cite{Oh89,Car02c,Car05}.
The proof presented here is a modification of the standard argument, see \cite{Tao06}.

 When $1+4/n\le p<1+4/(n-2)$, Carles  \cite[Theorem 1.4]{Car05} shows that if $A=0$ and $V= -\frac12|x|^2$ (more generally,  $V$ has a stronger repulsive component),  then global in time existence and scattering hold in $\sH^1$.
Carles' proof relies on global in time Strichartz estimate 
where the repulsive component of $V$ produces exponential decay for $U(t)$ that balances the confining force from its attractive component   
to control the nonlinear effects.
In the energy critical case $p=1+4/(n-2)$, Killip, Visan and Zhang proved the GWP and scattering
  for radial initial data in $\sH^1$ \cite{Zh07,KVZh09}. 

\section{Numerical simulations for GPE}\label{s:num-beclabV}
  The Strang splitting method 
  \cite{strang1968construction}  deals with  
  hyperbolic model problems with second order accuracy for finite difference schemes,
which initially appeared in \cite{lax1964difference}. 
 For a general nonlinear system one can write
\[u_t = c(t,{\bf x},D^{\alpha}u) = a(t,{\bf x},D^{\alpha}u) + b(t,{\bf x},D^{\alpha}u )\]
to obtain  the following two equations for which $u=v+w$ and 
\begin{align*}
v_t = a(t,{\bf x},D^{\alpha}u), \quad w_t= b(t,{\bf x},D^{\alpha}u).
\end{align*}
 In the NLS case
 this method also has second order  stability \cite{Lubich2008splitting}. 
Since the splitting scheme can preserve the structure of the PDE,
it also preserves the same conservation quantities \eqref{e:L2-conserv.u} and \eqref{e:E(t)-H.conserv} for the numerical solution as well as the analytic solution.

In this section we apply the Strang splitting algorithm to find numerical solution of the GPE in two dimensions.
By truncation we consider the following equation defined on the rectangle $R:=[a,b]\times [c,d]$ with periodic boundary conditions
 \begin{align}
&i \psi_t = -\frac12 \De \psi +V \psi+ \kappa |\psi|^{p-1}\psi \qquad (t,x,y)\in [0,T]\times [a,b]\times [c,d]\label{e:3.1}\\
& \psi(0,x,y)= \psi_0(x,y) \notag\\
&\psi(t,a,y) = \psi(t,b,y), \;  \psi(t,x,c) = \psi(t,x,d);
\quad \psi_x(t,a,y) =\psi_x(t,b,y), \;  \psi_x(t,x,c) = \psi_x(t,x,d) . \notag
\end{align}
The algorithm is implemented based on time-splitting trigonometric spectral approximations with fine mesh grids and time steps.  The solutions are computed mainly using GPELab \cite{GPElab}
adapted to various cases where $V(x,y)= {(\pm \ga_1^2 x^2\pm \ga^2_2 y^2)}/2$, $\kappa>0$ or $\kappa<0$.
The initial data is taken as either a gaussian in $C^\iy(\R^2)$ or a hat function in $H^1(\R^2)$.

We summarize  the numerical results in Figures \ref{f:defoc.VX2p3t2_g} to \ref{f:defoc.Vnegp5t5_hat}
and then provide error analysis in Tables 1 
and 2 
with progressively finer and finer mesh sizes and time steps. These errors are relatively very small and yield quite high accuracy.
 Relevant error estimates can be found in 
 \cite{Lubich2008splitting} and equation (26) in \cite{LuM13}.
Corresponding to two type of nonlinear regimes, we will select $\kappa = 1$ 
 and  
 $\kappa = -1.9718$ for repulsive and attractive 
  interactions, respectively. Let $a=c=-8,b=d=8$. Let $h=\De x=\De y=(b-a)/M$
   be the meshgrid size and $\De t=T/N$ the time step. 






 A. {Defocusing case: $\kappa=1>0$.} \quad
Set the initial data $\psi_0(x,y)=g_\sigma(x,y)$ with $\sigma=1$, where
\begin{equation}\label{e:g-IC}
g_\sigma(x,y):= \frac{1}{\sqrt{\sigma \pi}}e^{-(x^2 + y^2)/2\sigma} .
\end{equation}
   The following show the figures for the numerical solution $\psi_{approx}$ of \eqref{e:3.1}
 on $R=[-8,8]^2$ at different times in the presence of anisotropic quadratic potentials.
The numerical results are in consistence with the theory that attractive $V$ confines the waves to the ground state while the repulsive $V$ enhances the dispersion or scattering.


\begin{figure}[H]
        \centering
        \begin{subfigure}[b]{.45\textwidth}
                \centering
                \includegraphics[width=\textwidth]{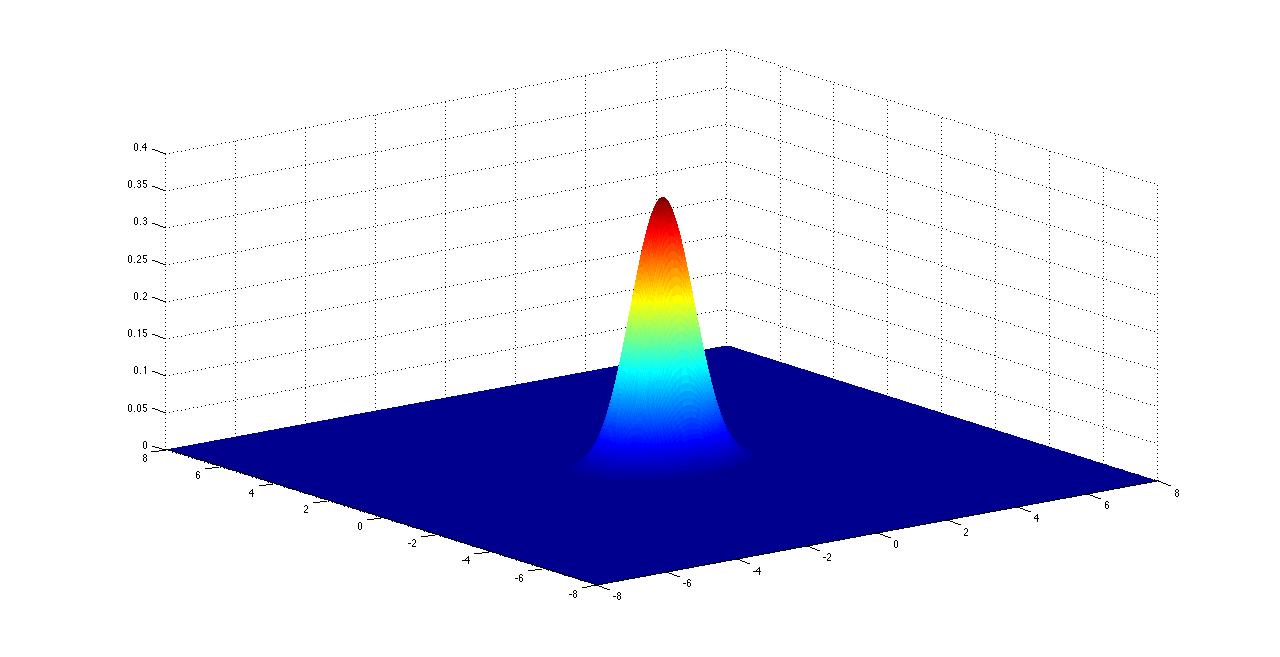}
                \caption{3d view of $|\psi(t,x,y)|^2$ at $t=2$}
                \label{fig:V1M512dt01t283d}
        \end{subfigure}
        \begin{subfigure}[b]{.45\textwidth}
                \centering
                \includegraphics[width=\textwidth]{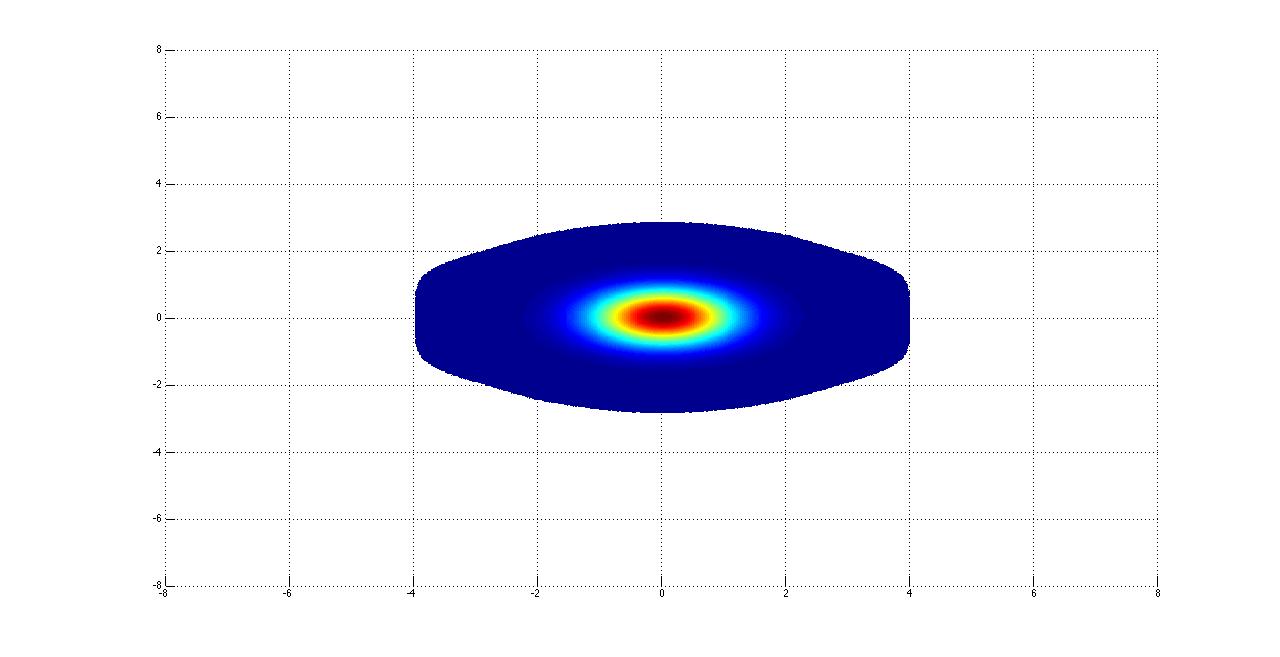}
                \caption{Top view of $|\psi(t,x,y)|^2$ at $t=2$}
                \label{fig:V1M512dt01t28topView}
        \end{subfigure}
                \caption{$p=3$, Defocusing $\kappa=1$, $V=\frac{x^2+4y^2}{2}$ 
                 ($\Delta t=0.01$, $h =\frac{1}{32}$)} 
                 \label{f:defoc.VX2p3t2_g}
\end{figure}

\begin{figure}[H]
        \centering
        \begin{subfigure}[b]{.45\textwidth}
                \centering
                \includegraphics[width=\textwidth]{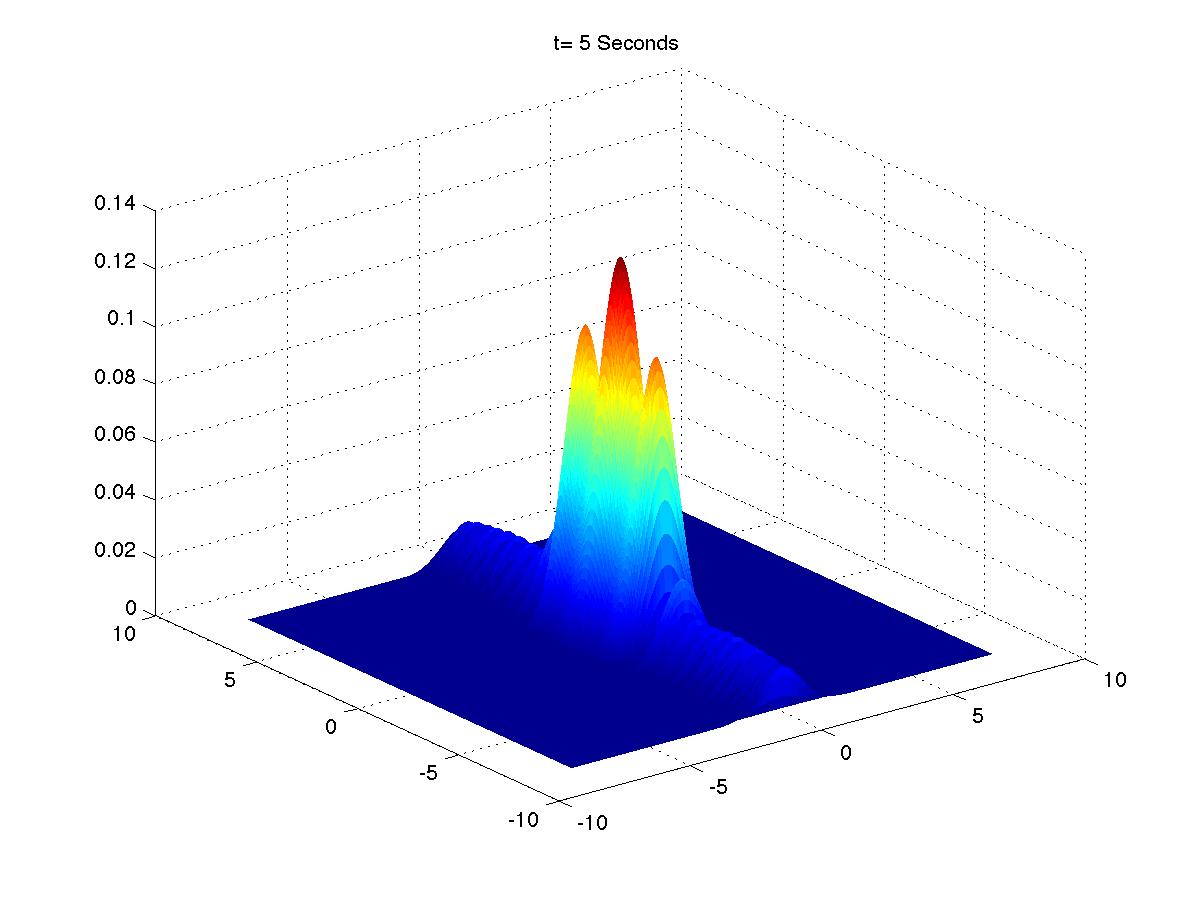}
                \caption{3d view of $|\psi(t,x,y)|^2$ at $t=5$}
                \label{fig:V3t5M256dt00183d}
        \end{subfigure}
        \begin{subfigure}[b]{.45\textwidth}
                \centering
                \includegraphics[width=\textwidth]{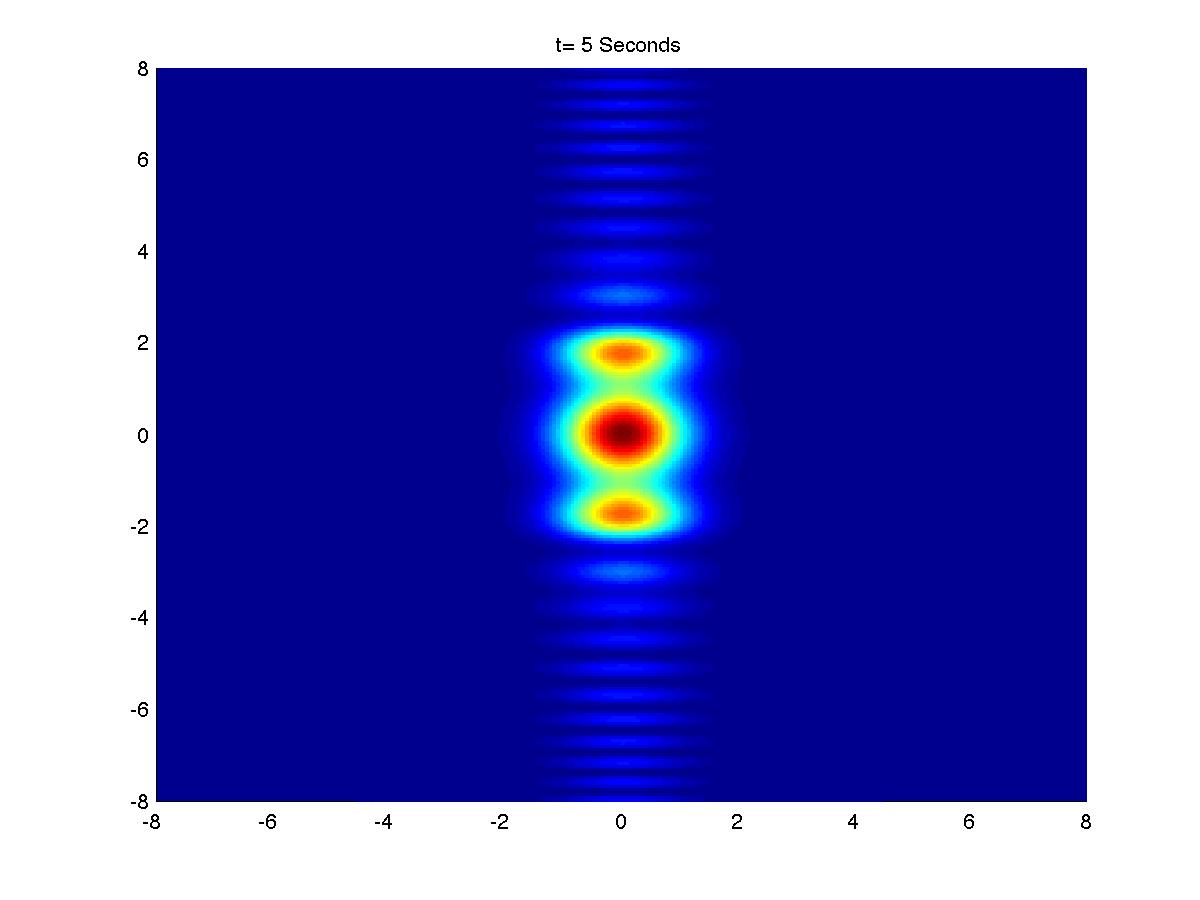}
                \caption{Top view of $|\psi(t,x,y)|^2$ at $t=5$}
                \label{fig:V3t5M256dt0018top}
        \end{subfigure}
                \caption{$p=3$, Defocusing $\kappa=1$, $V=\frac{x^2-y^2}{2}$  \\
                There exists evident dispersion in the $y$-direction} 
                 \label{fig:V3t5M256dt0018}
\end{figure}

\begin{figure}[H]
        \centering
        \begin{subfigure}[b]{.45\textwidth}
                \centering
                \includegraphics[width=\textwidth]{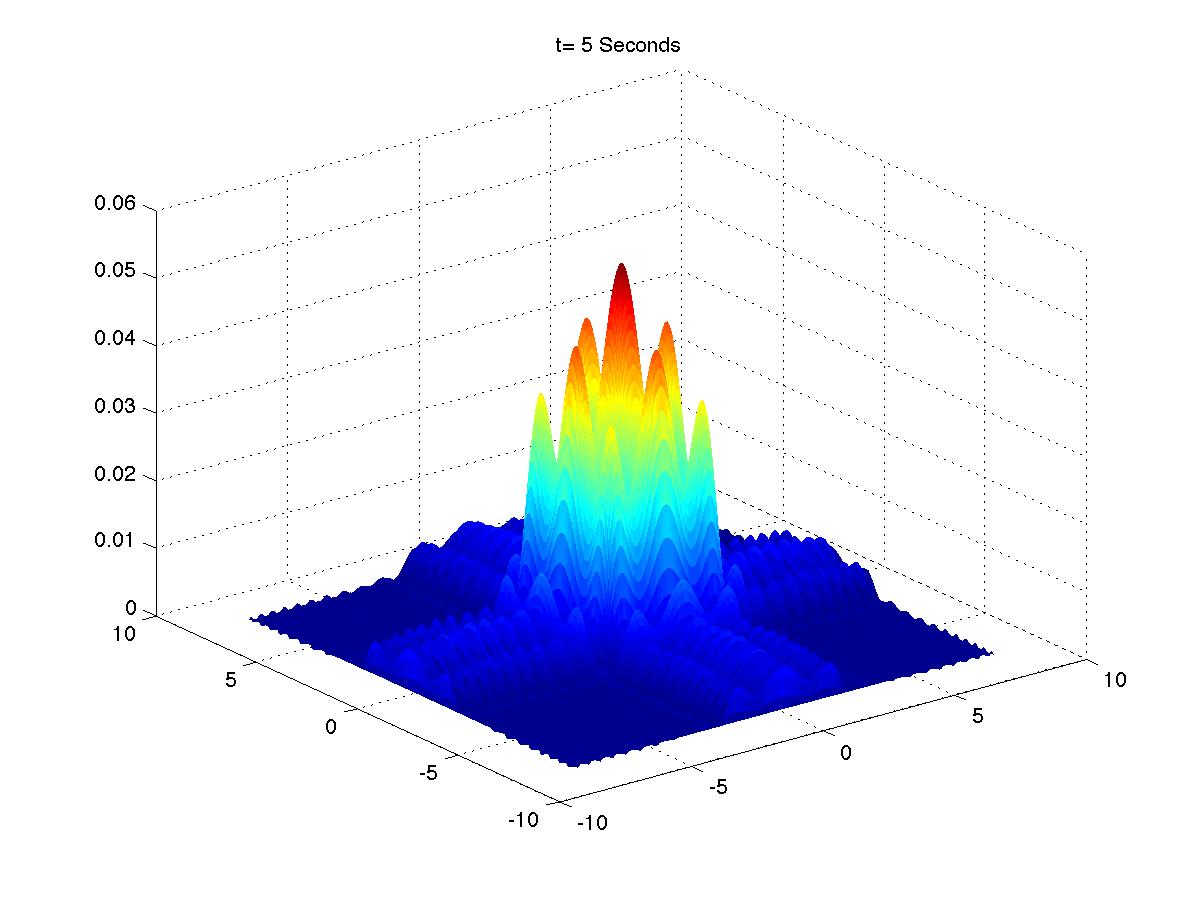}
                \caption{3d view of $|\psi|^2$ at $t=5$} 
                \label{fig:V4t5M256dt00183d}
        \end{subfigure}
        \begin{subfigure}[b]{.45\textwidth}
                \centering
                \includegraphics[width=\textwidth]{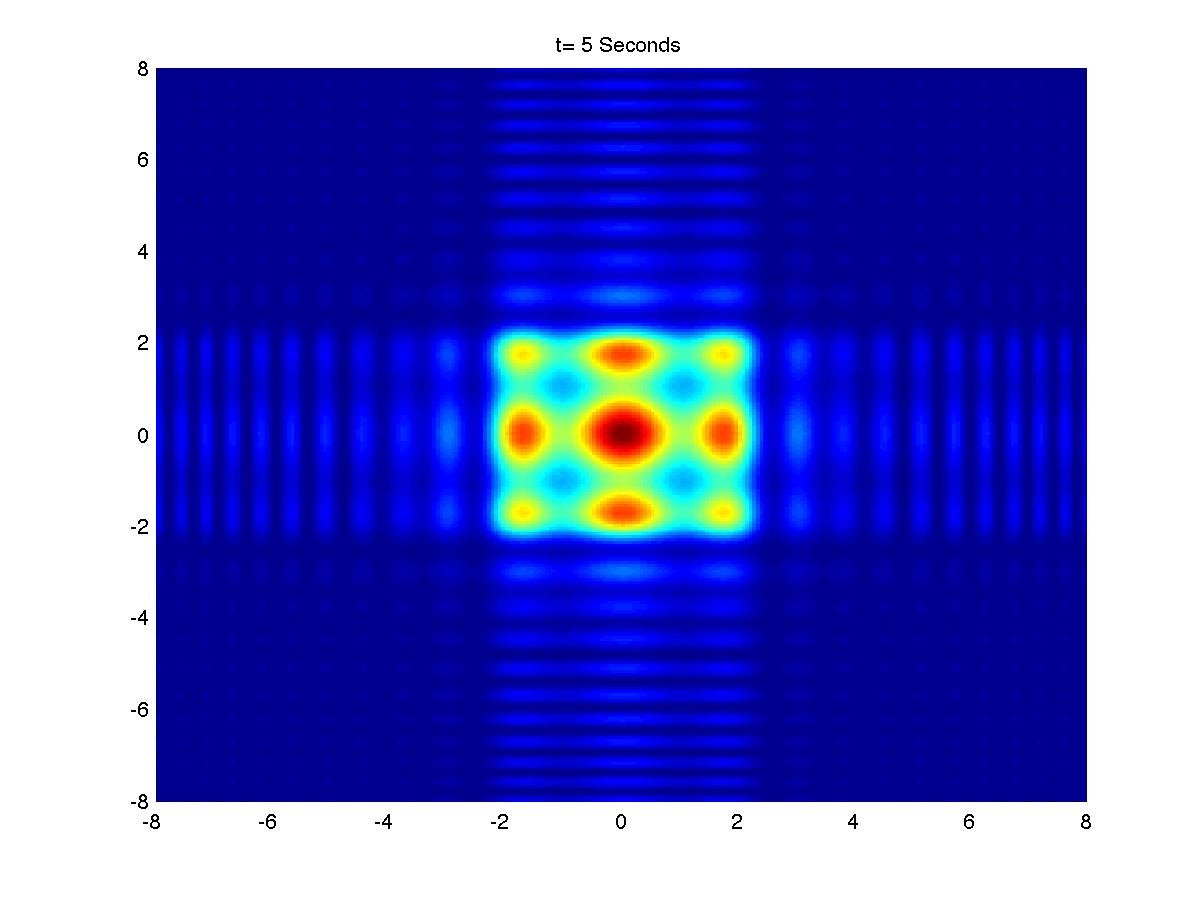}
                \caption{Top view of $|\psi|^2$ at $t=5$}
                \label{fig:V4t5M256dt0018top}
        \end{subfigure}
                \caption{$p=3$, Defocusing $\kappa=1$, $V=-\frac{x^2+y^2}{2}$ ($\Delta t=0.01$, $h =\frac{1}{16}$) \\ 
                Dispersion exist in both $x$- and $y$-directions} 
\end{figure}

B. Focusing case: $\kappa=1.9718<0$ with the same gaussian initial data (\ref{e:g-IC}).  \quad
In the mass-critical case $p=1+4/n=3$, the focusing NLS may have finite blowup solution.
The physics dictates that a positive harmonic potential is  attractive and confines the cooled bosonic atoms. 
 On the other hand, an inverted (negative) harmonic potential is repulsive and supports the dispersion which offsets the impact of the focusing effect.

 \begin{figure}[H]
        \centering
        \begin{subfigure}[b]{.450\textwidth}
                \centering
                \includegraphics[width=\textwidth]{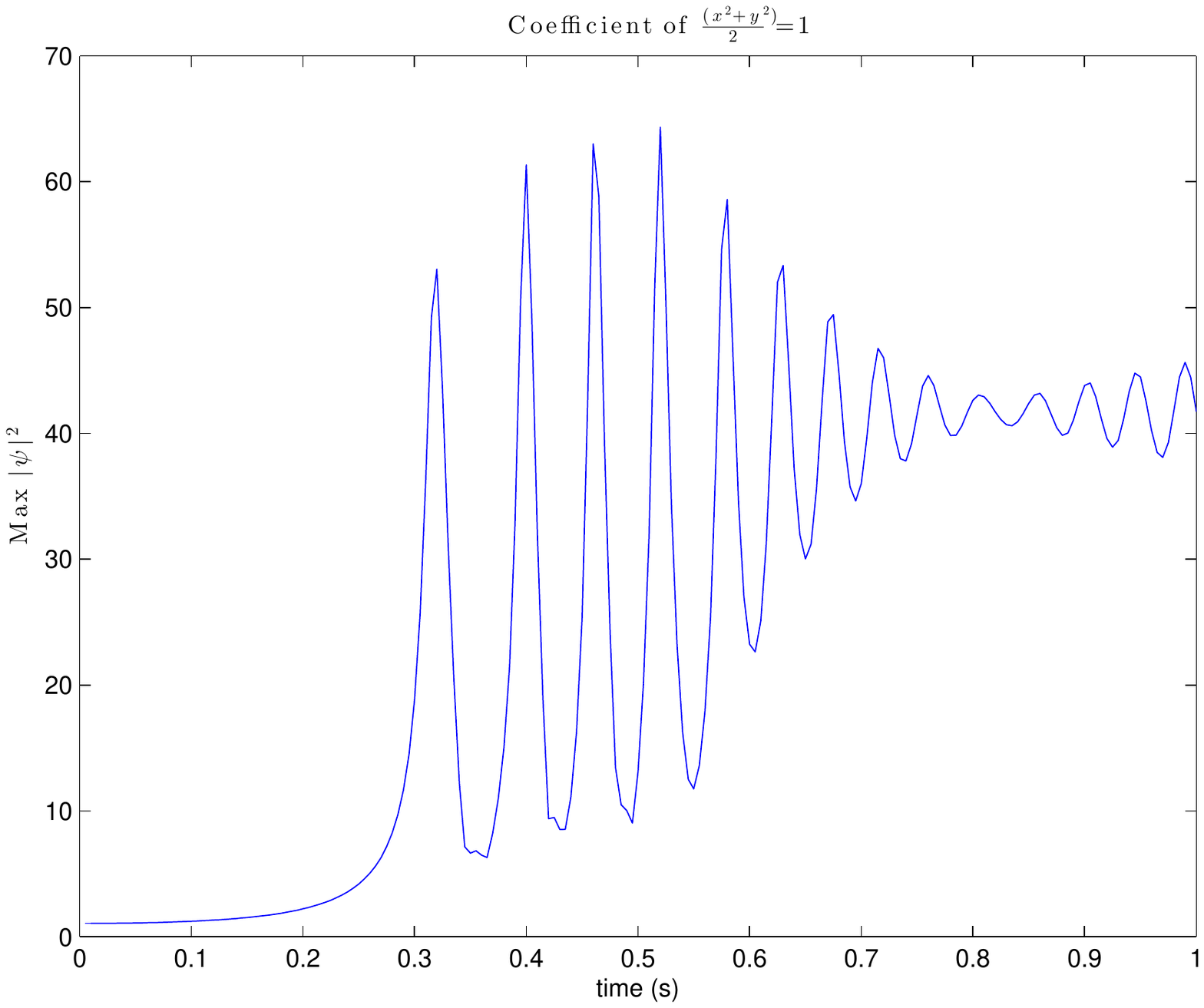}
                \caption{Focusing $\kappa$,\;$V=\frac{x^2+y^2}{2\varepsilon}$, $t\in[0,1]$}
                \label{fig:graphofneg1.pdf}
        \end{subfigure}
        \begin{subfigure}[b]{.450\textwidth}
                \centering
                \includegraphics[width=\textwidth]{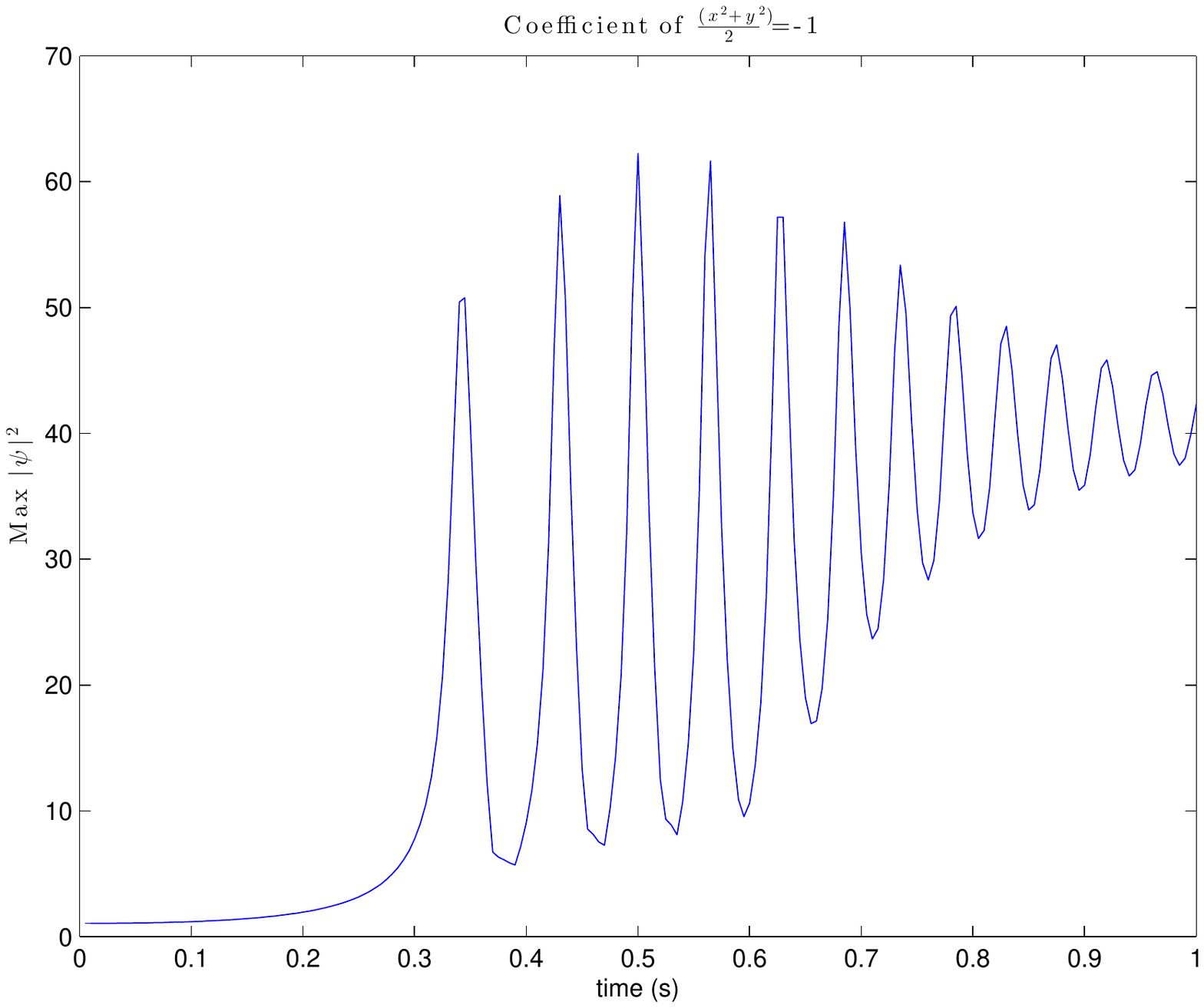}
                \caption{Focusing $\kappa$,\; 
                $V=-\frac{x^2+y^2}{2\varepsilon}$, $t\in[0,1]$}
                \label{fig:Focust1Vcoeffneg1.pdf}
        \end{subfigure}
               \caption{$\max_{(x,y)} |\psi|^2$ vs time, $\psi_0=\textrm{gaussian}$
                ($p=3$, $\kappa = -1.9718$,
                $\varepsilon=0.3$, 
                $\Delta t=0.01$, $h=\frac{1}{32}$)}
                 \label{f:Vneg1blowup}
\end{figure}
Now we observe from Figure \ref{f:Vneg1blowup} that if $V$ changes from $(x^2+y^2)/{2\varepsilon}$ to $-(x^2+y^2)/{2\varepsilon}$,
 then the blowup time has  a slight delay at approximately $t=0.35$.

 \begin{figure}[!b]
        \centering
        \begin{subfigure}[b]{.45\textwidth}
                \centering
                \includegraphics[width=\textwidth]{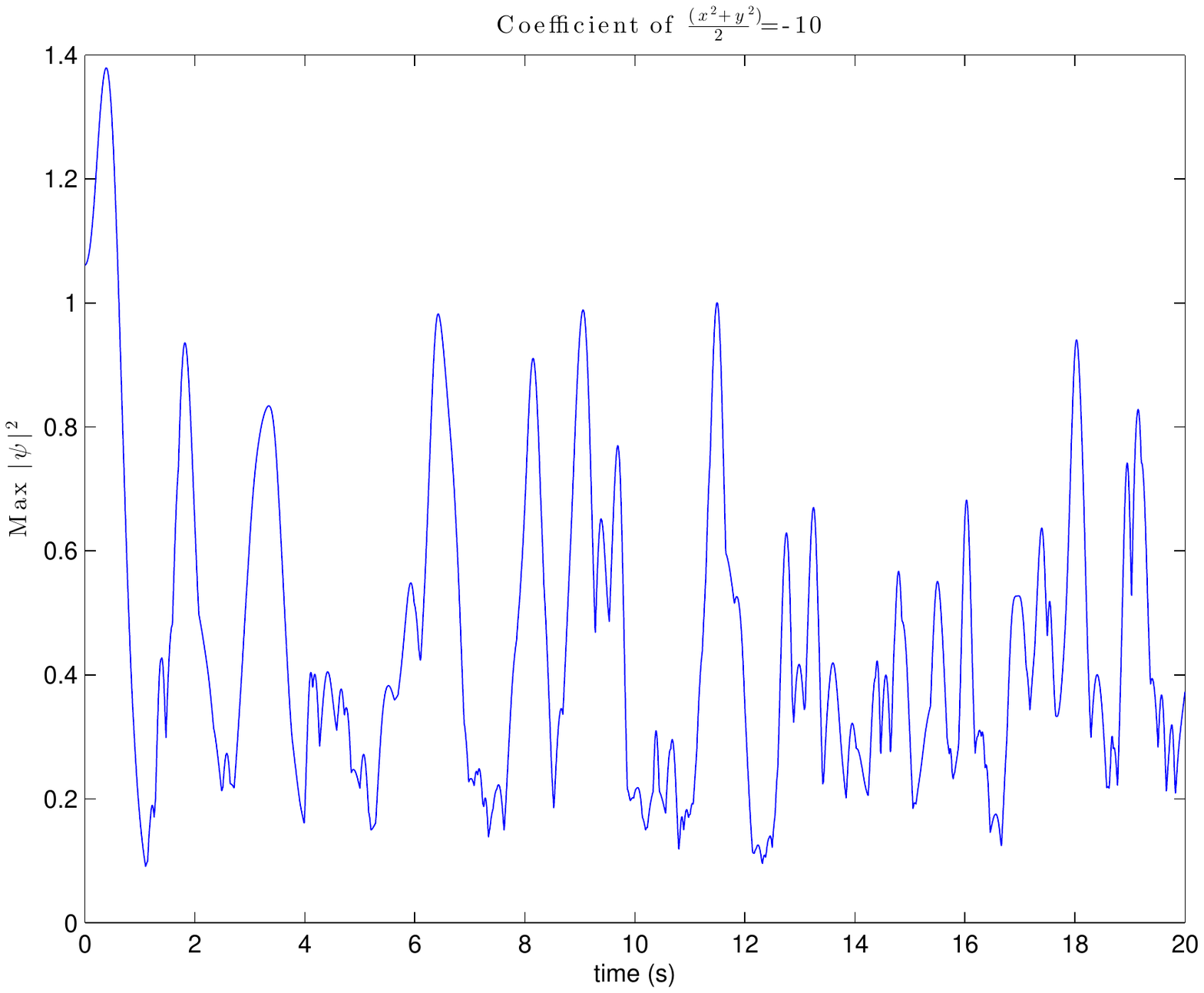}
                \caption{$\psi_0=\textrm{gaussian}$,  $V=\frac{-5(x^2+y^2)}{\varepsilon}$, $t\in[0,20]$}
                \label{f:graphofneg10.pdf}
        \end{subfigure}
        \begin{subfigure}[b]{.45\textwidth}
                \centering
                \includegraphics[width=\textwidth]{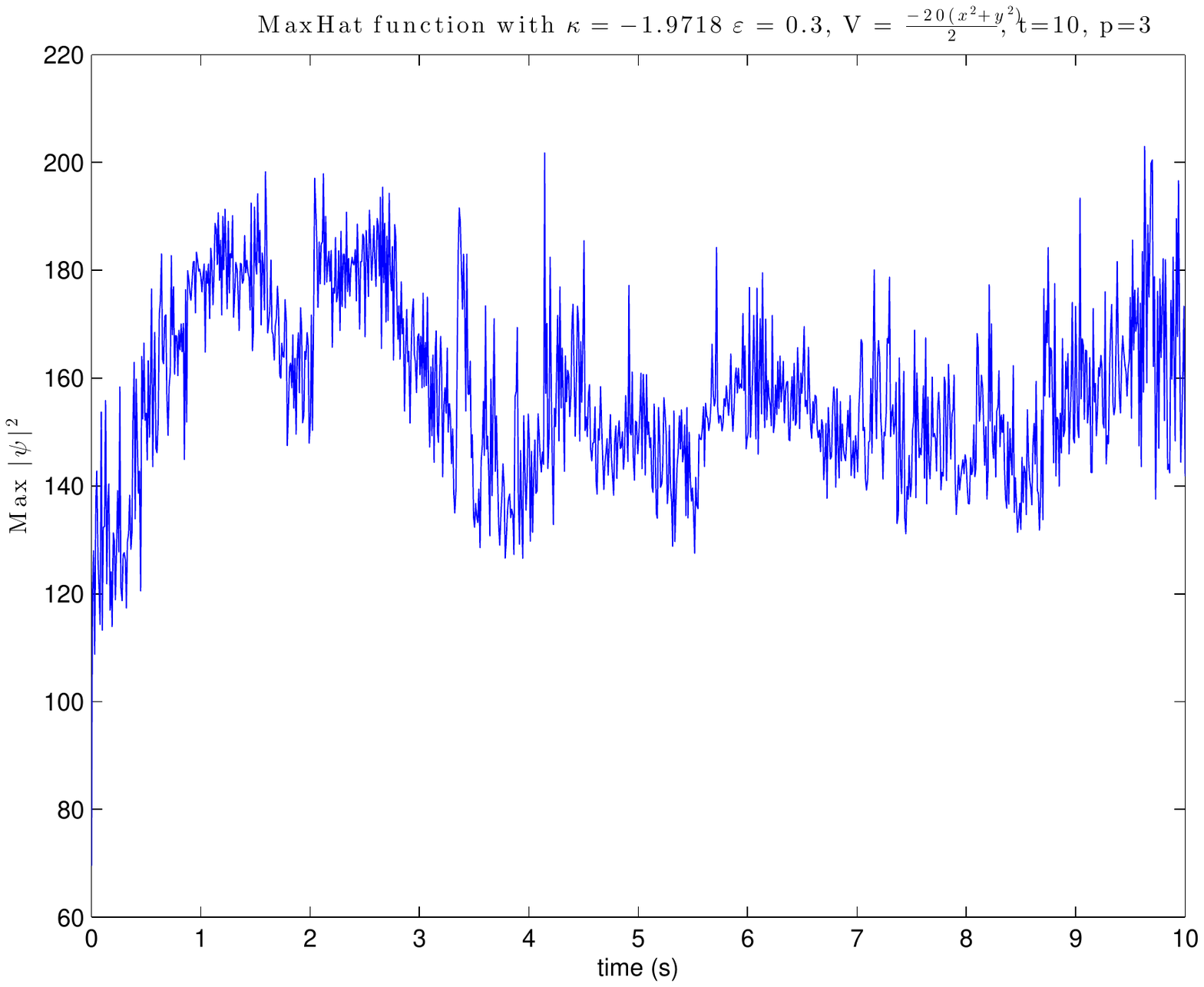}
                \caption{$\psi_0=\textrm{hat function}$, $V=\frac{-10(x^2+y^2)}{\varepsilon}$, $t\in[0,10]$}
                \label{fig:Focust20Vcoeff10.pdf}
        \end{subfigure}
                \caption{$\max_{(x,y)} |\psi|^2$ vs time $t$  ($\kappa=-1.9718$, $\varepsilon=0.3,  p=3$)}\label{f:Vneg10blowup}
\end{figure}

C. Initial data equal to the ``hat function'' $\psi_0=h\in H^1(\R^2)$. \quad
In Figures \ref{f:graphofneg10.pdf} and \ref{fig:Focust20Vcoeff10.pdf} we compare the density function
$|\psi|^2$ for two different initial data,
 one is given by the gaussian \eqref{e:g-IC}
and the other is given by the ``hat function''
\begin{align*}
&h(x,y)=
(8-|x|)(8-|y|) .
\end{align*}
The solutions show that if the magnitude of the frequency  is large enough then the inverted harmonic potential $V$ counteracts the focusing effect which leads to  global in time existence.
On a quite long time interval, they both reveal self-similarity (``multifractal-like'') for the density function 
although $h\in H^1$ has a larger magnitude with low regularity. 
However with $\psi_0$ being the hat function,
$|\psi(t,x,y)|^2$ is more irregular in temporal and much more singular in spatial variables, see Figure
\ref{f:hat.foc-frac.sing}.

The numerical results agree with Theorem \ref{t:gwp-L2smalldata} and \cite[Theorem 3.2]{Z12}.
Note that on local time interval the amplitude of $\psi$ is higher than in the free case $V=0$.
The lack of  the long time decay or scattering may be due to the fact that equation \eqref{e:3.1} has a ``truncation'' version.

 \begin{figure}[H]
        \centering
        \begin{subfigure}[b]{.45\textwidth}
                \centering
                \includegraphics[width=\textwidth]{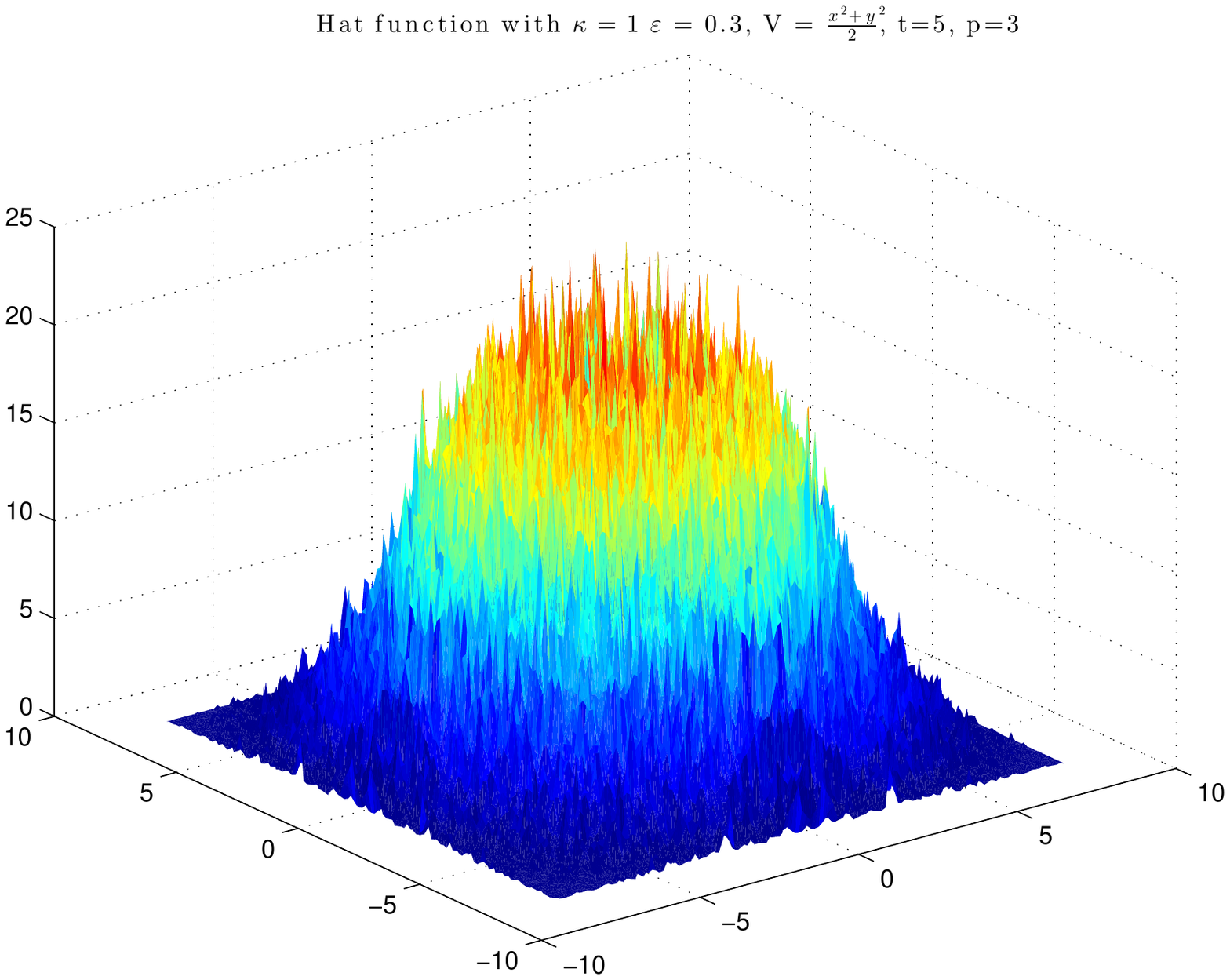}
                \caption{$\kappa=1$, $V=\frac{x^2+y^2}{2\veps}$}
                \label{f:defoc.VX2p3t5}
        \end{subfigure}
        \begin{subfigure}[b]{.45\textwidth}
                \centering
                \includegraphics[width=\textwidth]{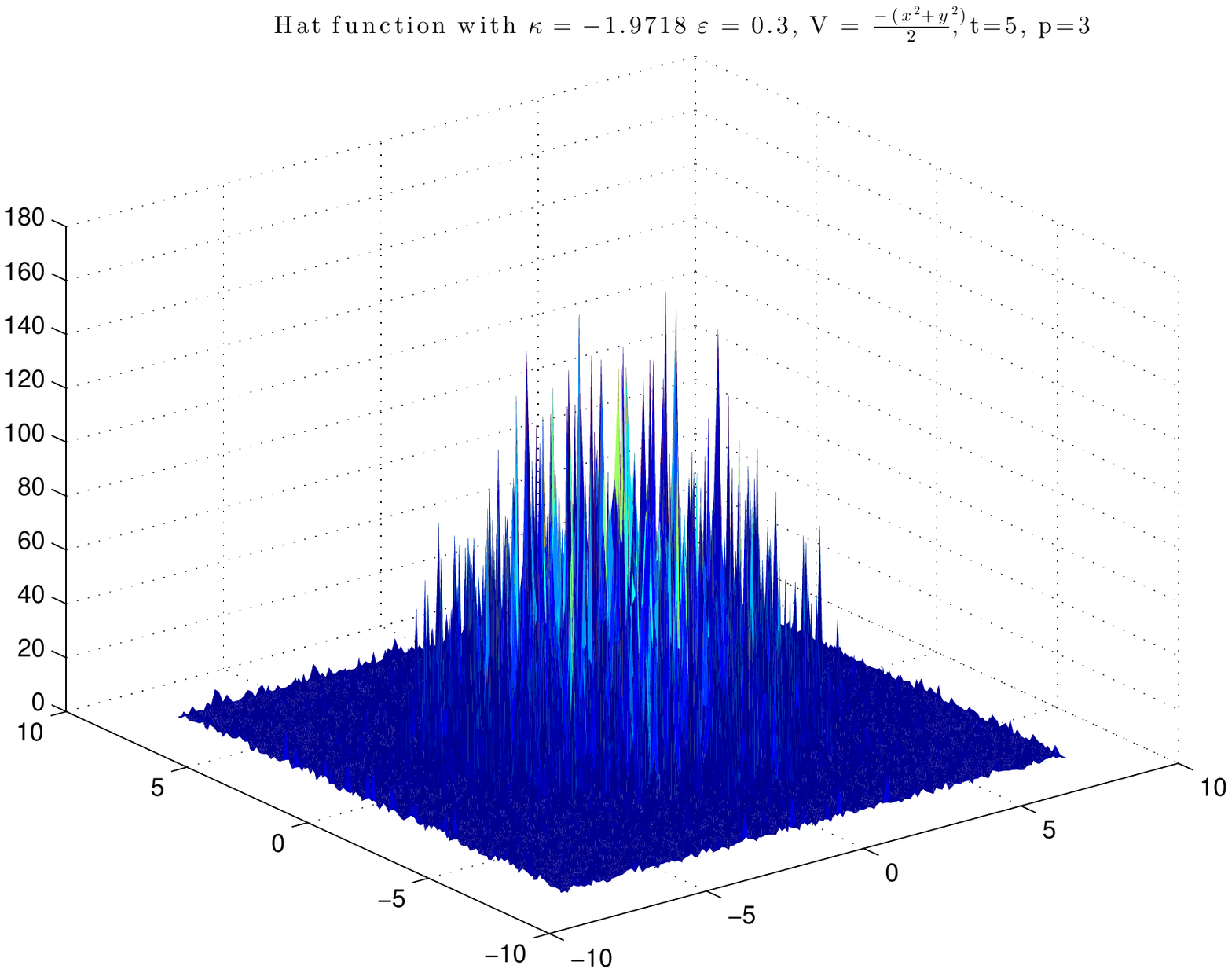}
                \caption{$\kappa=-1.9718$, $V=-\frac{x^2+y^2}{2\varepsilon}$}
                \label{fig:foc.VnegX2p3t5}
        \end{subfigure}
                \caption{$|\psi(t,x,y)|^2$ at $t=5$  ($\psi_0=\textrm{hat function}$, $p=3$, $\varepsilon=0.3$)}\label{f:hat.foc-frac.sing}
\end{figure}

\begin{table}
  \begin{tabular}{|c| c c c  c |}
\hline
Potential $V$  &  $h=\frac{1}{4}$ & $h=\frac{1}{8}$ & $h=\frac{1}{16}$ & $h=\frac{1}{32}$ \\ \hline
0                                                &2.5353e-05 & 1.2107e-11 & 3.4148e-12 & 1.3345e-11 \\ \hline
$\frac{x^2+y^2}{2\varepsilon}$& 1.8215e-05 & 4.0089e-10 & 1.7532e-10 & 8.5637e-11 \\ \hline
$-\frac{x^2+y^2}{2\varepsilon}$&  8.7515e-04 & 9.0129e-06 & 3.3705e-06 & 1.5221e-06 \\ \hline
$\frac{x^2+10y^2}{2\varepsilon}$& 1.7456e-01 & 1.3289e-03 & 7.5563e-10 & 7.9029e-11 \\ \hline
$\frac{x^2-10y^2}{2\varepsilon}$&  2.7557e+00 & 4.8414e+00 & 3.5978e+00 & 5.0977e-02 \\
\hline
\end{tabular}
\caption{Spatial discretization error analysis $\| \psi_{exact} - \psi_{approx} \|_{L^2}$ at $t=1$ on $R=[-8,8]^2$
 (Defocusing $\kappa=1$, $\veps=1$, $\Delta t = 0.00005$, $\psi_0= g_1$)}\label{tab:SpaceError8}
\label{tab:sp.err-defoc.p3t1_g}
\end{table}

\begin{table} 
  \begin{tabular}{|c| c c c c c|}
\hline
Potential $V$ & $\Delta t = 0.01$ & $\Delta t = 0.005$ & $\Delta t = 0.0025$ & $\Delta t = 0.00125$ & $\Delta t = 0.000625$ \\ \hline
0                                                &2.5615e-03 & 6.3592e-04 & 1.5871e-04 & 3.9662e-05 & 9.9139e-06 \\ \hline
$\frac{x^2+y^2}{2\varepsilon}$&1.3647e-02 & 3.4068e-03 & 8.5140e-04 & 2.1283e-04 & 5.3203e-05 \\ \hline
$-\frac{x^2+y^2}{2\varepsilon}$& 4.9640e-02 & 1.2426e-02 & 3.1075e-03 & 7.7695e-04 & 1.9425e-04\\ \hline
$\frac{x^2+10y^2}{2\varepsilon}$& 2.7675e-01 & 6.7647e-02 & 1.6747e-02 & 4.1805e-03 & 1.0447e-03\\
\hline
$\frac{x^2-10y^2}{2\varepsilon}$&  1.3843e+01 & 4.1819e+00 & 1.1328e+00 & 3.1327e-01 & 5.5413e-02\\
\hline
\end{tabular}
\caption{
 Temporal discretization error analysis $\| \psi_{exact} -\psi_{approx} \|_{L^2}$ at $t=1$ on
$R=[-8,8]^2$\\  \quad\quad(Defocusing $\kappa=1$, $\veps=1$, $\De x=\De y=\frac{1}{64}$, $\psi_0=g_1$)}\label{tab:TimeError8}
\end{table}

Tables 1 and 2 show the error analysis between the approximate solution and the exact solution.
In both cases the results have reached good accuracy as well as efficiency.
However when we test on the case  where $\kappa=-1.9718$ ($\veps=0.3$, $\Delta t =0.00005$),
the spatial and the temporal error analysis seem to indicate quite big difference between the use of
 the 
 gaussian and the use of 
 the hat function. 
 Numerical result 
 shows that in the \emph{focusing} case,
 if $\psi_0=h$, then 
the approximation solution $\psi_{approx}$ along with the error $\Vert \psi_{exact}-\psi_{approx}\Vert_{L^2}$
becomes larger in short time and the blowup comes sooner with more
singularities. Note that the relative error is not small since $\Vert \psi(t)\Vert_2=1024/3$.
This might suggests that for numerical purpose one needs to use smoother initial data in order to maintain the prescribed accuracy, see the discussions in \cite{Lubich2008splitting, LuM13}.




\begin{figure}[H]
        \centering
        \begin{subfigure}[b]{.45\textwidth}
                \centering
                \includegraphics[width=\textwidth]{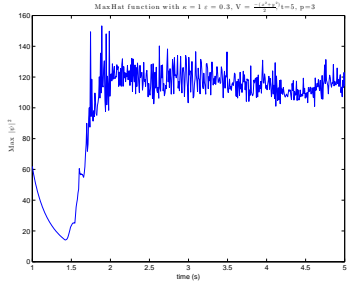}
                \caption{Defocusing cubic NLS ($p=3$)}
                \label{fig:hat.defoc.neg-1}
        \end{subfigure}
        \begin{subfigure}[b]{.45\textwidth}
                \centering
                \includegraphics[width=\textwidth]{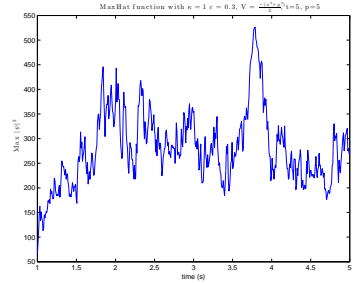}
                \caption{Defocusing quintic NLS ($p=5$)}
                \label{fig:defocV+1}
        \end{subfigure}
                \caption{$\max_{(x,y)}|\psi|^2$ vs time $t\in[0,5]$, $\psi_0=\textrm{hat function}$ ($\kappa = 1$,  $V=-\frac{x^2+y^2}{2\varepsilon}$, $\varepsilon=0.3$)}\label{f:defoc.Vnegp5t5_hat}
\end{figure}

\section{Conclusion}  In the study of the NLS for Bose-Einstein Condensation, the analytic and numerical tools and results we have applied, discovered and reviewed provide good understanding of the modeling equations.
On the numerical aspect,  the Strang splitting method has been shown to be very accurate in many cases
 \cite{bao2002time}. 
In the literature this type of splitting schemes apply to a wide range of nonlinear problems including  KdV, Maxwell-Dirac and Zakharov systems, Boltzmann equation and Landau damping \cite{Baolec,LuM13,CaLi10,EinO12}.  


Recent theory informs that when the quadratic potential has only positive frequency, the wave function exists locally in time and stable, and when $V$ has large negative frequency, then it can counteract the nonlinear effect.
 The outcome of the simulations  agree with the physics of the BEC under trapping conditions. In the case where the anisotropic quadratic potential has sufficiently higher negative coefficients 
   we observe  a dissipative pattern over time, similar to that of the defocusing nonlinearity 
 \cite{Car05,JEMWC96}.  The focusing nonlinearity causes an attractive effect on the condensate that can cause it to ``blowup''. 
  These are true when $\psi_0$ is a gaussian.  In Figure \ref{f:graphofneg10.pdf} 
   after short time 
  the linear $V$ starts to take over and there shows scattering like in the linear periodic case.   
  However,  when the initial data has low regularity, we observe singularities over very short time.
Nevertheless, over a quite long time interval the solutions exhibit multi-fractal structure similar to the linear case \cite{KW94}.
Thus it may be worthwhile to look into the post-blowup behavior of the solutions.

The general understanding is that the BEC mechanism decouples into two states: The ground state from focusing effect and the dispersion from the repulsive interaction. Considering the recent work on BEC with rotation or more generally, the NLS with magnetic fields \cite{LiuT04,AMS10,Z12}, where some questions are quite open,
it would be of interest to continue to study such model under more critical conditions. This investigation, on the analytic and numerical levels, would help explain how the excited sates are formed and how dispersion or scattering can be achieved by appropriately manipulating BEC with potentials, the nonlinearities, and actions of symmetries.


\vspace{.23in}
\nd{\bf Acknowledgment} This work is funded in part by the COSM pilot interdisciplinary project at GSU. The authors would like to thank Dr.~M. Edwards  for constructive discussions.


\begin{thebibliography}{99}








\bibitem{ADu01} A. Aftalion, and Q. Du. Vortices in a rotating Bose-Einstein condensate: Critical angular velocities and energy diagrams in the Thomas-Fermi regime. {\em Phys. Rev. A} {\bf 64} (2001), 063603.




\bibitem{And1995} M. Anderson, J. Ensher, M.R. Matthews, C.E. Wieman, and E. Cornell, Observation of Bose-Einstein condensation in a dilute atomic vapor. {\em Science} {\bf 269} (1995), no. 5221, 198--201. 


\bibitem{AKe02} J. Anglin, and W. Ketterle, Bose-Einstein condensation of atomic gases. {\em Nature} {\bf 416} (2002), 211--218.


\bibitem{AMS10}  P.~Antonelli,  D. Marahrens,  and C. Sparber,
On the  Cauchy problem for  nonlinear Schr\"odinger  equations with rotation. {\em Discr. Contin. Dyn. Syst.-Ser. A} {\bf 32} (2012), no. 3,  703--715. 



\bibitem{Baolec} W. Bao, {\em The Nonlinear Schr\"odinger Equation,
Superfluidity and Quantum Hydrodynamics}. Fudan Lecture Notes. 


\bibitem{bao2002time} W. Bao, S. Jin, and P. Markowich, On time-splitting spectral approximations for the Schr\"odinger equation in the semiclassical regime, {\em J. Comput. Phys.} {\bf 175} (2002), no. 2, 487--524.


\bibitem{BR2013} L. Berlyand, and V. Rybalko, Pinning by holes of multiple vortices in homogenization for Ginzburg-Landau problems. 
  {arXiv.math.AP}, 2011.







\bibitem{BOSE24} 
S. N. Bose, Plancks Gesetz und Lichtquantenhypothese. {\em Zeitschr.  Phys.} {\bf 26} (1924),  178--181 (German translation).  


\bibitem{deB91} A.~de Bouard,  Nonlinear Schr\"odinger equations with magnetic fields.
{\em Diff. Integral Eq.}  {\bf 4}  (1991), no.1, 73--88.
















\bibitem{CaLi10} Z. Cai, and R, Li,
Numerical regularized moment method of arbitrary order for Boltzmann-BGK equation.
{\em SIAM J. Sci. Comput.}  {\bf 32} (2010), no. 5, 2875--2907. 








\bibitem{Car02c} R. Carles, Critical Nonlinear Schr\"odinger equations with and without harmonic potential.
{\em Math. Models Methods Appl. Sci}. {\bf 12} (2002), no. 10, 1513--1523.



\bibitem{Car05} R. Carles, Global existence results for nonlinear Schr\"odinger equations with quadratic potentials.
\emph{Discr. Contin. Dyn. Syst}. {\bf 13} (2005), no. 2, 385--398.





\bibitem{Caz03} T. Cazenave, {\em Semilinear Schr\"odinger Equations}. Courant Lecture Notes in Mathematics
 {\bf 10}, 
 2003.


\bibitem{CazW88}  T. Cazenave, and F. Weissler. The Cauchy problem for the nonlinear Schr\"odinger equation in
$H^1$.  {\em Manuscrip. Math.} {\bf 61} (1988), 477--494.

\bibitem{CazW89} T. Cazenave, and F. Weissler. Some remarks on the nonlinear Schr\"odinger equation
in the critical case. In Lect. Notes in Math., volume {\bf 1394}, pp. 18--29. Springer, Berlin, 1989.



\bibitem{ChC07} S.-L. Chang, and C.-S. Chien, 
{Adaptive continuation algorithms for computing energy levels of rotating Bose-Einstein condensates.}
{\em Computer Physics Communications} {\bf 177} (2007) 707--719.






\bibitem{Davis1995}  K. Davis, M.-O. Mewes, M. van Andrews, N. Van Druten, D. Durfee, D. Kurn, and W. Ketterle, Bose-einstein condensation in a gas of sodium atoms. {\em Physical Review Lett.} {\bf 75} (1995), no. 22, 3969--3973.








\bibitem{BEdKSC2012}  M. Edwards, M. Krygier, H. Seddiqi, B. Benton, and C. Clark, Approximate mean-field equations of motion for quasi-two-dimensional Bose-Einstein-condensate systems.
{\em Physical Review E}. {\bf 86} (2012), 056710.


\bibitem{EinO12} L. Einkemmer, and A. Ostermann, Convergence analysis of a discontinuous Galerkin/Strang splitting approximation for the Vlasov--Poisson equations. 
  arXiv.math.NA:1211.2353

\bibitem{Ein24} A. Einstein,  Quantentheorie des einatomigen idealen Gases. {\em Sitzungsb.
 Preuss. Akad.  Wissensch.} {\bf 22} (1924), 261--{267}.

\bibitem{Ein25} A. Einstein,  Quantentheorie des einatomigen idealen Gases 2. {\em Sitzungsb.
 Preuss. Akad. Wissensch.} {\bf 8} (1925), 3--{14}.








\bibitem{ESY07a} L. Erd\"os,  B. Schlein,  and H.-T. Yau, Derivation of the cubic non-linear Schr\"odinger equation from quantum dynamics of many-body systems. {\em Invent. Math}. {\bf 167} (2007), no. 3, 515--614.




\bibitem{F09} L. Fanelli, Non-trapping magnetic fields and Morrey-Campanato estimates for Schr\"odinger operators.
{\em J. Math. Anal. Appl}. {\bf 357} (2009) 1--14.

















\bibitem{FrL04} J. Fr\"ohlich,  and E. Lenzmann,  Mean-Field limit of quantum Bose gases and nonlinear Hartree equation.
  {\em Semin. Equ. Deriv. Partielles} (2004), Exp. no. XIX. Ecole Polytechnique Palaiseau.



\bibitem{Gar12} A. Garcia, Magnetic virial identities and applications to blow-up for Schr\"odinger and wave equations. {\em J.  Phys. A}: Mathematical and Theoretical {\bf 45} (2012), no. 1, 015202.






\bibitem{GinBH05} N. Ginsberg, J. Brand, and L. Hau, Observation of hybrid soliton vortex-ring structures in Bose-Einstein condensates. {\em Phys. Rev Lett}.  {\bf 94} (2005), no.4, 040403. 





\bibitem{GPElab} 
X. Antoine, and R. Duboscq,  
{\em GPELab},  web software. Universit\'e de Lorraine,
Institut Elie Cartan de Lorraine. 




\bibitem{Gr09} N. Grisouard, Quantum vortices in a glass of Bose-Einstein condensate. 
Preprint 2009. 


\bibitem{Gross1961}
E.  Gross,  Structure of a quantized vortex in boson systems Il. {\em Nuovo Cimento} {\bf 20} (1961), no.3, 454--457.



\bibitem{HHsiaoL07a}  C.-C. Hao, L. Hsiao, and H.-L. Li, Global well-posedness for the Gross-Pitaevskii equation with an angular momentum rotational term. {\em Math. Meth. Appl. Sci}. {\bf 31}  (2008), no. 6,  655--664.

\bibitem{HHsiaoL07b} C.-C. Hao, L. Hsiao, and H.-L. Li, Global well-posedness for the Gross-Pitaevskii equation
with an angular momentum rotational term in three dimensions. {\em J. Math. Phys}.  {\bf 48}
(2007), 102105. 

\bibitem{JEMWC96} D. Jin, J. Ensher, M. Matthews, C. Wieman, and E. Cornell, Collective excitations of a bose-einstein condensate in a dilute gas. {\em Phys. Rev. Lett.} {\bf 77} (1996), no. 3, 420--423.


\bibitem{KW94} O. Kavian, and F. Weissler, Self-similar solutions of the pseudo-conformally invariant
nonlinear Schr\"odinger equation. {\em Michigan Math. J}.  {\bf 41} (1994), no. 1, 151--173.



\bibitem{KVZh09} R. Killip, M. Visan, and X. Zhang, The energy-critical nonlinear Schr\"odinger equation with quadratic potentials.  {\em Comm. P. D. E.} {\bf 34} (2009), no. 10-12, 1531--1565.




\bibitem{lax1964difference} P. Lax, and B. Wendroff, Difference schemes for hyperbolic equations with high order of accuracy. {\em Comm. P.A.M.} {\bf 17} (1964), no. 3, 381--398.



\bibitem{LiZh10} D. Li, and X. Zhang, Regularity of almost periodic modulo scaling solutions for mass-critical NLS and applications. {\em Analysis and PDE} {\bf  3} (2010), no. 2, 175--195.







\bibitem{LiuT04} H. Liu, and E. Tadmor, Rotation prevents finite-time breakdown. {\em Phys. D} {\bf 188} (2004), no.
3-4, 262--276.


\bibitem{LuM13} J. Lu, and J. Marzuola,  Strang splitting methods applied to a quasilinear  Schr\"odinger
equation. 
Preprint 2013.

\bibitem{Lubich2008splitting} C. Lubich, On splitting methods for Schr\"odinger-Poisson and cubic nonlinear Schr\"odinger equations. {\em Math. Comp}. {\bf 77} (2008), no. 264, 2141--2153.





\bibitem{MAHHWC99} M. Matthews, B. Anderson, P. Haljan, D. Hall, C. Wieman, and E. Cornell,
Vortices in a Bose-Einstein condensate. {\em Phys. Rev. Lett}. {\bf 83} (1999),  2498--2501.




\bibitem{Mi08} L.~Michel, Remarks on non-linear Schr\"odinger equation with magnetic fields.
{\em Comm. P. D. E.} {\bf 33} (2008), 1198--1215.

\bibitem{MuH02} E. Mueller, and T. Ho, Two-component Bose-Einstein condensates with a large number of vortices.  {\em Phys. Rev. Lett}. {\bf 88} (2002),  no.18, 180403. 



\bibitem{Na01} Y. Nakamura, Local solvability and smoothing effects of nonlinear Schr\"odinger equations with magnetic fields. {\em Funkcial Ekvac}. {\bf 44} (2001), 1--18.





\bibitem{Oh89} Y.-G. Oh, Cauchy problem and Ehrenfest's law of nonlinear Schr\"odinger equations with potentials.
 \emph{J. Diff. Eq.} {\bf 81} (1989), no. 2, 255--274.

\bibitem{OZ08} G.~\'Olafsson, and S.~Zheng, Harmonic analysis related to Schr\"odinger operators.  {\em Contemp.  Math}. {\bf  464} (2008), 213--230.




\bibitem{Pitae1961} L. Pitaevskii, Vortex lines in an imperfect Bose gas.  {\em Soviet. Phys}. JETP {\bf 13}  (1961), no. 2, 451--454.



\bibitem{ruprecht95} P. Ruprecht, M. Holland, K. Burnett, and M. Edwards, Time-dependent solution of the nonlinear Schr\"odinger equation for bose-condensed trapped neutral atoms. {\em Phys. Rev. A} {\bf 51} (1995), no. 6, 4704.

\bibitem{Rybin2000} A.  Rybin, G. Varzugin, M. Lindberg, J. Timonen, and R.  Bullough. Similarity solutions and collapse in the attractive Gross-Pitaevskii equation. {\em Phys. Rev. E} {\bf 62} (2000), no.3, 
6224--6228.




\bibitem{Sq08} M. Squassina, Soliton dynamics for the nonlinear  Schr\"odinger  equation with magnetic
field.  {\em Manuscrip. Math}. {\bf 130} (2009), 461--494.

\bibitem{strang1968construction} G. Strang, On the construction and comparison of difference schemes. \emph{SIAM J.  Num. Anal.} {\bf 5} (1968), no. 3, 506--517.




\bibitem{Tao06}  T. Tao,  {\em Nonlinear Dispersive Equations}: Local and global analysis. CBMS Regional Conference Series in Mathematics {\bf 106}, 2006.  




\bibitem{Ts87a} Y. Tsutsumi, $L^2$-solutions for nonlinear Schr\"odinger equations and nonlinear
groups. {\em Funkcial Ekvac}. {\bf 30} (1987), 115--125.



\bibitem{WTs98} M. Wadati, and T. Tsurumi, Critical number of atoms for the magnetically trapped Bose-Einstein condensate with negative $s$-wave scattering length. {\em Phys. Lett. A} {\bf 247} (1998) 287--293.








\bibitem{Ya91} K. Yajima, Schr\"odinger evolution equations with magnetic fields.
{\em J. d'Analyse Math.}  {\bf 56} (1991), 29--76.



\bibitem{Zh07} X. Zhang, Global wellposedness and scattering for 3D energy critical Schr\"odinger equation with repulsive potential and radial data. {\em Forum Math}. {\bf 19} (2007), 633--675.

\bibitem{ZhZ10} Z.~Zhang, and S.~Zheng, Strichartz estimates and local wellposedness for Schr\"odinger equation with the twisted sub-Laplacian. {\em Proc. C. M. A}.  {\bf 44} (2010), 233--243, 
 Australian National University.

\bibitem{Z11} S. Zheng, Spectral multipliers for Schr\"odinger operators. {\em Illinois J.  Math}. {\bf 54} (2010), no. 2, 621--647.

\bibitem{Z12} S. Zheng, Fractional regularity for  nonlinear Schr\"odinger equations with magnetic fields.
          {\em Contemp. Math.} {\bf 581} (2012), 271--285. 





\end{thebibliography}
\end{document}